\definecolor{shadecolor}{gray}{0.90}				
\def\boitegrise#1#2{\begin{centerline}{\fcolorbox{black}{shadecolor}{~
    \begin{minipage}[t]{#2}{\vphantom{~}#1\vphantom{$A_{\displaystyle{A_A}}$}}
            \end{minipage}~}}\end{centerline}\medskip}
\DeclareMathOperator{\odd}{\mathit{Odd}}
\DeclareMathOperator{\even}{\mathit{Even}}
\DeclareMathOperator{\im}{im}
\DeclareMathOperator{\zrho}{\mathbb{Z}[\rho]}
\DeclareMathOperator{\ephi}{\varphi_{\!\rho}}
\newcommand{\bigslant}[2]{{\raisebox{.15em}{$#1$}\left/\raisebox{-.15em}{$#2$}\right.}}
\newcommand{\zrhomod}[1]{\bigslant{\zrho}{(#1)}}
\newcommand{\zrhomodunit}[1]{\left(\bigslant{\zrho}{(#1)}\right)^\times}
\newcommand{\divides}{\,\vert\,}
\newcommand{\ndivides}{\centernot\vert}
\newcommand{\plus}{\texttt{+}}
\newcommand{\minus}{\raisebox{.7pt}{-}}
\newtheorem{theorem}{Theorem}[section] 
\newtheorem{lemma}[theorem]{Lemma}
\newtheorem{corollary}[theorem]{Corollary}
\newtheorem{proposition}[theorem]{Proposition}
\theoremstyle{definition}
\newtheorem{example}[theorem]{Example}
\theoremstyle{definition}
\newtheorem{definition}[theorem]{Definition}
\theoremstyle{plain}
\theoremstyle{definition}
\newtheorem{remark}[theorem]{Remark}
\theoremstyle{definition}
\newtheorem{convention}[theorem]{Convention}
\theoremstyle{definition}
\newtheorem{question}[theorem]{Question}
\numberwithin{figure}{section}
\numberwithin{theorem}{section}
\title{\textcolor{red}{\textbf{An Euler phi function for the Eisenstein integers and some applications}}}
\author{
	Emily Gullerud\\
	{\small\textit{University of Minnesota}}\\
  {\small\href{mailto:gulle069@umn.edu}{\nolinkurl{gulle069@umn.edu}}}
  \and
  aBa Mbirika\\
	{\small\textit{University of Wisconsin-Eau Claire}}\\
  {\small\href{mailto:mbirika@uwec.edu}{\nolinkurl{mbirika@uwec.edu}}}
}
\newcommand\ackname{Acknowledgments}
  \newenvironment{acknowledgments}{%
      \titlepage
      \null\vfil
      \@beginparpenalty\@lowpenalty
      \begin{center}%
        \bfseries \ackname
        \@endparpenalty\@M
      \end{center}}%
     {\par\vfil\null\endtitlepage}
  \newenvironment{acknowledgments}{%
      \if@twocolumn
        \section*{\abstractname}%
      \else
        \small
        \begin{center}%
          {\bfseries \ackname\vspace{-.5em}\vspace{\z@}}%
        \end{center}%
        \quotation
      \fi}
      {\if@twocolumn\else\endquotation\fi}
\begin{document}

\maketitle

\begin{abstract}
The Euler phi function on a given integer $n$ yields the number of positive integers less than $n$ that are relatively prime to $n$. Equivalently, it gives the order of the group of units in the quotient ring $\bigslant{\mathbb{Z}}{(n)}$ for a given integer $n$. We generalize the Euler phi function to the Eisenstein integer ring $\zrho$ where $\rho$ is the primitive third root of unity $e^{2\pi i/3}$ by finding the order of the group of units in the ring $\zrhomod{\theta}$ for any given Eisenstein integer $\theta$. As one application we investigate a sufficiency criterion for when certain unit groups $\zrhomodunit{\gamma^n}$ are cyclic where $\gamma$ is prime in $\zrho$ and $n \in \mathbb{N}$, thereby generalizing well-known results of similar applications in the integers and some lesser known results in the Gaussian integers. As another application, we prove that the celebrated Euler-Fermat theorem holds for the Eisenstein integers.

\end{abstract}

\tableofcontents 



\section{Introduction}

It is well known that the integers $\mathbb{Z}$, the Gaussian integers $\mathbb{Z}[i]$, and the Eisenstein integers $\zrho$ where $\rho = e^{2 \pi i/3}$ share many structural properties. For instance, each ring is a Euclidean domain and hence also a principal ideal domain and unique factorization domain as a consequence. Thus many results over the integers have been extended to both the Gaussian and Eisenstein integers. However one primary difference is that while $\mathbb{Z}$ has a total ordering, there are no such orderings on $\mathbb{Z}[i]$ or $\zrho$. Yet it is still possible to extend the Euler phi function on $\mathbb{Z}$ to $\mathbb{Z}[i]$ or $\zrho$. The Euler phi function $\varphi$ evaluated on an integer $n \geq 1$  yields the number of positive integers less than $n$ that are relatively prime to $n$. Observe that an alternative (yet equivalent) way to view the image $\varphi(n)$ of $n \in \mathbb{Z}$ arises from considering the number of units in the multiplicative group $\bigslant{\mathbb{Z}}{(n)}$ as follows:
\begin{align*}
\varphi(n)		&= \left| \{k \in \mathbb{N} \, : \, k<n \mbox{ and } \gcd(k,n) = 1 \} \right| \\
					&= \left| \{ [k] \in \bigslant{\mathbb{Z}}{(n)} \, : \, \gcd(k,n) = 1 \} \right| \\
					&= \left| \left( \bigslant{\mathbb{Z}}{(n)} \right)^\times \right|.
\end{align*}
Although no ordering exists on the Gaussian or the Eisenstein integers, for any element $r$ in a ring $R$ we can consider the corresponding quotient $\bigslant{R}{(r)}$ and find the cardinality of its unit group $\left( \bigslant{R}{(r)} \right)^\times$. Indeed this has been done already in 1983 by Cross for the Gaussian integers~\cite{Cross1983}. In this paper we extend Cross's idea to the Eisenstein integers by doing the following:
\begin{enumerate}
\item We present the well-known characterization of the three types of primes in $\zrho$ (see Proposition~\ref{prop:primes}).
\item For each prime $\gamma \in \zrho$ and $n \in \mathbb{N}$, we define a complete set of residue classes for the quotient $\zrhomod{\gamma^n}$ (see Theorem~\ref{thm:equivClasses}). Moreover, we identify the units in this quotient (see Theorem~\ref{thm:unitsInRings}).
\item Analogous to the $\mathbb{Z}$-setting, we define an Euler phi function $\ephi$ on the Eisenstein integers to be the size of the corresponding groups of units. That is, given $\eta \in \zrho$, then $\ephi(\eta) = \Big|\zrhomodunit{\eta}\Big|$ (see Section~\ref{sec:Eulerphi}).
\item For a prime $\gamma \in \zrho$ and an integer $n \geq 1$, we compute the value $\ephi (\gamma^n)$ (see Theorem~\ref{thm:sizeOfUnits}). Furthermore we prove $\ephi$ is multiplicative (see Corollary~\ref{cor:phi_is_mult}).
\item Exploiting the fact that $\zrho$ is a unique factorization domain, we may write $\theta \in \zrho$ uniquely as a product of powers of primes $\theta = \gamma_1^{k_1} \gamma_2^{k_2} \cdots \gamma_r^{k_r}$ up to associates. Then the Euler phi function on $\theta$ is
$$ \ephi(\theta) = \prod_{i=1}^r \ephi \left( \gamma_i^{k_i} \right).$$
\end{enumerate}

We conclude the paper by giving applications of the Euler phi function on $\zrho$. As one application we investigate a sufficiency criterion for when certain unit groups $\zrhomodunit{\gamma^n}$ are cyclic where $\gamma$ is prime in $\zrho$ and $n \in \mathbb{N}$, thereby generalizing well-known results of similar applications over the integers dating back to Gauss and lesser known results over the Gaussian integers \cite{Cross1983}. As another application, we prove that the Euler-Fermat Theorem extends naturally to the Eisenstein integers. This was done recently over the Gaussian integers but remained to be proven over the Eisenstein integers~\cite{Roberson2016}.

\begin{acknowledgments}
The authors acknowledge Edray Goins of Pomona College for helpful comments and suggestions. We especially thank Stan Wagon of Macalester College for sharing his Eisenstein ring \texttt{Mathematica} code with us; this code was an invaluable aid for our computations. We also acknowledge the faculty and students at Florida Gulf Coast University for their hospitality where we first presented these results. Finally, we acknowledge The Mousetrap in Eau Claire, Wisconsin for their hospitality where we conducted much of this research.
\end{acknowledgments}

\bigskip


\section{Preliminaries and definitions}

The \textit{Eisenstein integers}, denoted $\zrho$, is a subring of $\mathbb{C}$ defined as follows: $$\zrho = \{ a+b\rho \mid a,b \in \mathbb{Z} \text{ and } \rho = e^{2\pi i/3 }\}.$$ Notice that $\rho$ is the primitive third root of unity satisfying $x^3-1=0$. In particular, the minimal polynomial of $\rho$ is the quadratic $x^2+x+1$, or the third cyclotomic polynomial, and hence $\zrho$ like the Gaussian integers $\mathbb{Z}[i]$ is a \textit{quadratic integer ring}. These two quadratic integer rings tessellate the complex plane; in particular, the Eisenstein integers form a regular triangular (and consequently hexagonal) lattice in the complex plane as illustrated in the Figure~\ref{fig:eisenPlot}.

By abuse of language, for each $a + b\rho \in \zrho$ we denote $a$ as the \textit{real part} and $b$ as the \textit{rho part}. So each horizontal edge in Figure~\ref{fig:eisenPlot} represents a change of one unit in the real part, each diagonal NW-SE edge represents a change of one unit in the rho part, and each diagonal SW-NE edge represents a change of one unit in both parts as in Figure~\ref{fig:movingInLattice}.

\newpage

\begin{figure}[H]
\centering
\begin{tikzpicture}
\draw[cyan] (3,3.46410) -- (-1.5,-4.33012);
\draw[cyan] (3.5,2.59807) -- (-.5,-4.33012);
\draw[cyan] (4,1.73205) -- (.5,-4.33012);
\draw[cyan] (4.5,0.86602) -- (1.5,-4.33012);
\draw[cyan] (5,0) -- (2.5,-4.33012);
\draw[cyan] (1.5,4.33012) -- (-3,-3.46410);
\draw[cyan] (0.5,4.33012) -- (-3.5,-2.59807);
\draw[cyan] (-0.5,4.33012) -- (-4,-1.73205);
\draw[cyan] (-1.5,4.33012) -- (-4.5,-0.86602);
\draw[cyan] (-2.5,4.33012) -- (-5,0);
\draw[cyan] (3,-3.46410) -- (-1.5,4.33012);
\draw[cyan] (3.5,-2.59807) -- (-.5,4.33012);
\draw[cyan] (4,-1.73205) -- (.5,4.33012);
\draw[cyan] (4.5,-0.86602) -- (1.5,4.33012);
\draw[cyan] (5,0) -- (2.5,4.33012);
\draw[cyan] (1.5,-4.33012) -- (-3,3.46410);
\draw[cyan] (0.5,-4.33012) -- (-3.5,2.59807);
\draw[cyan] (-0.5,-4.33012) -- (-4,1.73205);
\draw[cyan] (-1.5,-4.33012) -- (-4.5,0.86602);
\draw[cyan] (-2.5,-4.33012) -- (-5,0);
\draw[cyan] (-2.5,4.33012) -- (2.5,4.33012);
\draw[cyan] (-3,3.46410) -- (3,3.46410);
\draw[cyan] (-3.5,2.59807) -- (3.5,2.59807);
\draw[cyan] (-4,1.73205) -- (4,1.73205);
\draw[cyan] (-4.5,0.86602) -- (4.5,0.86602);
\draw[cyan] (-4.5,-0.86602) -- (4.5,-0.86602);
\draw[cyan] (-4,-1.73205) -- (4,-1.73205);
\draw[cyan] (-3.5,-2.59807) -- (3.5,-2.59807);
\draw[cyan] (-3,-3.46410) -- (3,-3.46410);
\draw[cyan] (-2.5,-4.33012) -- (2.5,-4.33012);
\draw[green, thick, <->] (0,-5.5) -- (0,5.5);
\draw[red, very thick, <->] (-6,0) -- (6,0) node[above]{$\mathbb{R}$-axis}
node[pos=.079,below,black]{\footnotesize$\minus5$}
node[pos=.161,below,black]{\footnotesize$\minus4$}
node[pos=.245,below,black]{\footnotesize$\minus3$}
node[pos=.329,below,black]{\footnotesize$\minus2$}
node[pos=.413,below,black]{\footnotesize$\minus1$}
node[pos=.5835,below,black]{\footnotesize$1$}
node[pos=.667,below,black]{\footnotesize$2$}
node[pos=.75,below,black]{\footnotesize$3$}
node[pos=.832,below,black]{\footnotesize$4$}
node[pos=.918,below,black]{\footnotesize$5$};
\foreach \x in {-5,-4,...,5} {
\fill (\x,0) circle (2pt); }
\draw[blue, very thick, <->] (3,5.19615) -- (-3,-5.19615) node[left]{$\rho^2$-axis}
node[pos=.072,left,black]{\footnotesize$5\plus5\rho$}
node[pos=.155,left,black]{\footnotesize$4\plus4\rho$}
node[pos=.239,left,black]{\footnotesize$3\plus3\rho$}
node[pos=.322,left,black]{\footnotesize$2\plus2\rho$}
node[pos=.405,left,black]{\footnotesize$1\plus\rho$}
node[pos=.57,left,black]{\footnotesize$\minus1\minus\rho$}
node[pos=.655,left,black]{\footnotesize$\minus2\minus2\rho$}
node[pos=.737,left,black]{\footnotesize$\minus3\minus3\rho$}
node[pos=.821,left,black]{\footnotesize$\minus4\minus4\rho$}
node[pos=.904,left,black]{\footnotesize$\minus5\minus5\rho$};
\foreach \x in {-2.5,-2,...,2.5} {
\fill (\x,1.732050808*\x) circle (2pt); }
\draw[magenta, very thick, <->] (3,-5.19615) -- (-3,5.19615) node[left]{$\rho$-axis} 
node[pos=.072,left,black]{\footnotesize$\minus5\rho$}
node[pos=.155,left,black]{\footnotesize$\minus4\rho$}
node[pos=.239,left,black]{\footnotesize$\minus3\rho$}
node[pos=.322,left,black]{\footnotesize$\minus2\rho$}
node[pos=.405,left,black]{\footnotesize$\minus\rho$}
node[pos=.57,left,black]{\footnotesize$\rho$}
node[pos=.655,left,black]{\footnotesize$2\rho$}
node[pos=.739,left,black]{\footnotesize$3\rho$}
node[pos=.822,left,black]{\footnotesize$4\rho$}
node[pos=.906,left,black]{\footnotesize$5\rho$};
\foreach \x in {-2.5,-2,...,2.5} {
\fill (\x,-1.732050808*\x) circle (2pt); }
\end{tikzpicture}
\caption{The Eisenstein integers form a triangular lattice which tessellates the complex plane.}
\label{fig:eisenPlot}
\end{figure}
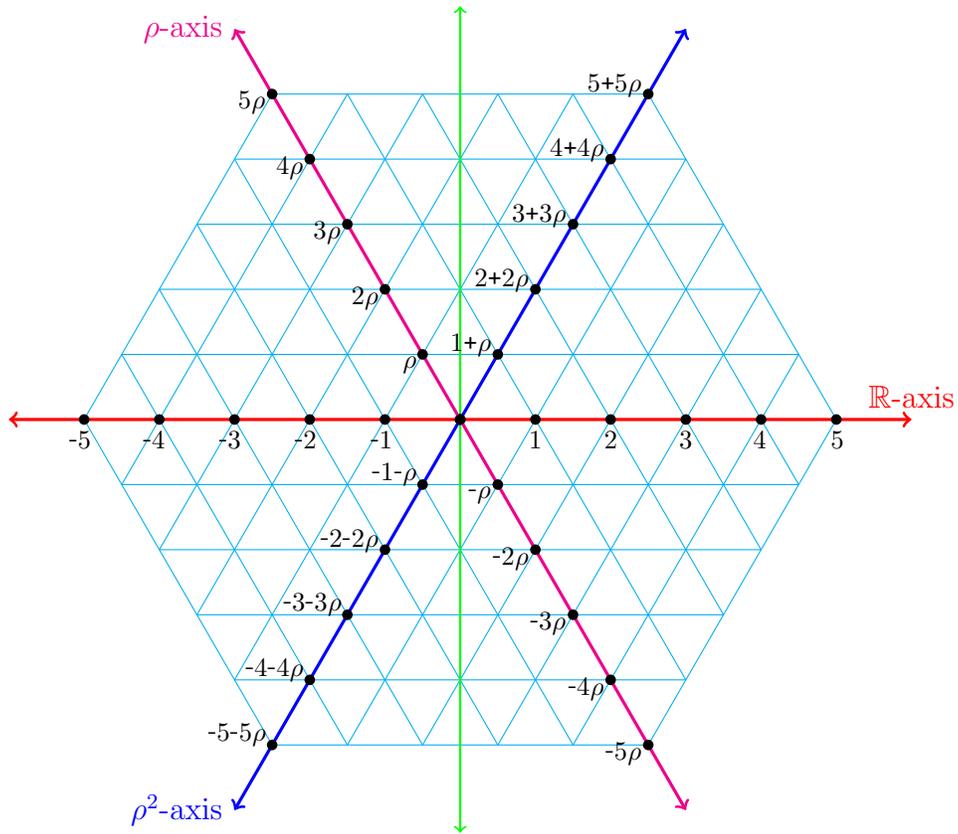

\vfill

\begin{figure}[H]
\centering
\begin{tikzpicture}[scale=.8]
\draw[cyan,thick] (-1,1.73205) -- (1,1.73205)
node[pos=-.1,above=-3pt,black]{\footnotesize$a\plus (b\plus 1)\rho$}
node[pos=1.2,above=-3pt,black]{\footnotesize$(a\plus 1)\plus (b\plus 1)\rho$};
\draw[cyan,thick] (-2,0) -- (2,0)
node[pos=.66,above=-3pt,black]{\footnotesize$a\plus b\rho$};
\draw[cyan,thick] (-1,-1.73205) -- (1,-1.73205)
node[pos=-.2,below=-1pt,black]{\footnotesize$(a\minus 1)\plus (b\minus 1)\rho$}
node[pos=1.1,below=-1pt,black]{\footnotesize$a\plus (b\minus 1)\rho$};
\draw[cyan,thick] (-1,-1.73205) -- (1,1.73205);
\draw[cyan,thick] (1,-1.73205) -- (-1,1.73205);
\draw[cyan,thick] (2,0) -- (1,1.73205)
node[pos=.11,right=-1pt,black]{\footnotesize$(a\plus 1)\plus b\rho$};
\draw[cyan,thick] (-1,1.73205) -- (-2,0)
node[pos=.89,left=-1pt,black]{\footnotesize$(a\minus 1)\plus b\rho$};
\draw[cyan,thick] (-2,0) -- (-1,-1.73205);
\draw[cyan,thick] (1,-1.73205) -- (2,0);
\fill (-1,1.73205) circle (2pt);
\fill (1,1.73205) circle (2pt);
\fill (-2,0) circle (2pt);
\fill (0,0) circle (2pt);
\fill (2,0) circle (2pt);
\fill (-1,-1.73205) circle (2pt);
\fill (1,-1.73205) circle (2pt);
\end{tikzpicture}
\caption{Starting at the point $a+b\rho$, a shift by one of the six units results in one of the six points on the hexagon surrounding $a+b\rho$.}
\label{fig:movingInLattice}
\end{figure}
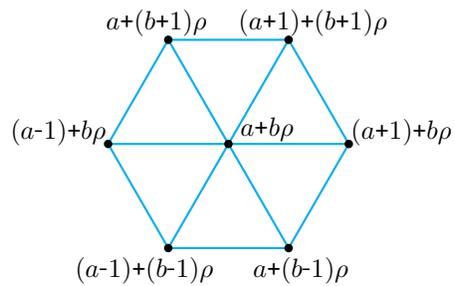

\vfill

\newpage

The following definitions and lemma are well known and can be found in multiple sources~\cite{Hardy1980,Ireland1990,Cox2013}.

\begin{definition}
\leavevmode 
\begin{itemize}
\item If $\theta = a+b\rho \in \zrho$, then the \textit{complex conjugate} of $\theta$, denoted $\overline{\theta}$, is defined $$\overline{\theta} = a + b\rho^2 = (a-b) - b\rho.$$
\item If $\theta = a+b\rho \in \zrho$, then the \textit{norm} of $\theta$ in given by $N(\theta) = \theta\overline{\theta} = a^2 -ab + b^2.$ The norm lies in $\mathbb{Z}$ and is multiplicative.
\item The element $\theta \in \zrho$ is a \textit{unit} if $\theta\eta = 1$ for some $\eta \in \zrho$.
\item The elements $\theta,\eta \in \zrho$ are \textit{associates} if $\theta = \delta\eta$ for some unit $\delta \in \zrho$. This is denoted $\theta \sim \eta$.
\end{itemize}
\end{definition}

\begin{lemma}\label{lem:units}[Ireland and Rosen~\cite[-
Proposition~9.1.1]{Ireland1990}]
If $\theta \in \zrho$, then $\theta$ is a unit if and only if $N(\theta) = 1$. The units in $\zrho$ are $1, -1, \rho, -\rho, \rho^2, \text{and} -\rho^2$.
\end{lemma}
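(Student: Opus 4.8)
The plan is to establish the equivalence first and then extract the explicit list of units from it. Throughout I would lean on the two facts recorded in the preceding definition: that $N(\theta) = a^2 - ab + b^2$ lands in $\mathbb{Z}$, and that $N$ is multiplicative.

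For the equivalence, I would first note that $N$ is nonnegative and vanishes only at $0$, since completing the square gives $N(a+b\rho) = \left(a - \tfrac{b}{2}\right)^2 + \tfrac34 b^2$. The backward implication is then immediate: if $N(\theta) = 1$, the identity $\theta\overline{\theta} = N(\theta) = 1$ exhibits $\overline{\theta} \in \zrho$ as a two-sided inverse, so $\theta$ is a unit. For the forward implication, I would suppose $\theta\eta = 1$ with $\eta \in \zrho$; applying $N$ and invoking multiplicativity yields $N(\theta)N(\eta) = N(1) = 1$, and since both factors are positive integers, each must equal $1$.

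To pin down the units explicitly, I would use the equivalence just proved to reduce the problem to solving $a^2 - ab + b^2 = 1$ over $a, b \in \mathbb{Z}$. The same completed-square expression $\left(a - \tfrac{b}{2}\right)^2 + \tfrac34 b^2 = 1$ forces $\tfrac34 b^2 \le 1$, hence $b \in \{-1, 0, 1\}$, and by the symmetry of the form in $a$ and $b$ the same bound holds for $a$. Substituting each admissible value of $b$ and solving the resulting quadratic in $a$ leaves exactly the solutions $\pm 1$, $\pm\rho$, and $\pm(1 + \rho)$. Finally, the defining relation $1 + \rho + \rho^2 = 0$ (so $\rho^2 = -1 - \rho$) rewrites $\pm(1+\rho)$ as $\mp\rho^2$, producing the six units $1, -1, \rho, -\rho, \rho^2, -\rho^2$.

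The only genuinely computational step is the finite enumeration coming out of the quadratic-form bound; the rest is a routine application of multiplicativity, so I expect no serious obstacle. It is worth observing that these six units are precisely the sixth roots of unity, which is the expected picture given that $\zrho$ is a ring of cyclotomic integers — this also serves as a sanity check that no solutions were missed.
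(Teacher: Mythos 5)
Your proof is correct. The paper itself offers no proof of this lemma---it is quoted from Ireland and Rosen \cite{Ireland1990}---so there is nothing to compare against, but your argument (multiplicativity of $N$ for the forward direction, $\theta\overline{\theta}=N(\theta)$ for the converse, and the completed-square bound $\left(a-\tfrac{b}{2}\right)^2+\tfrac{3}{4}b^2=1$ to enumerate the solutions of $a^2-ab+b^2=1$) is the standard one and all steps check out, including the identification $\pm(1+\rho)=\mp\rho^2$ via $1+\rho+\rho^2=0$.
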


\begin{remark}
The units in $\zrho$ form a cyclic group in the following sense:
\begin{align*}
\langle -\rho^2 \rangle &= \{1, -\rho^2, \rho, -1, \rho^2, -\rho \}\\
&= \{1, 1+\rho, \rho, -1, -1-\rho, -\rho \} = \left(\zrho\right)^\times \cong \mathbb{Z}_6.
\end{align*}
Figure~\ref{fig:normsPlot} gives these six units on the circle of norm 1. 
\end{remark}


\subsection{Even and ``odd'' Eisenstein integers}

In the integers the quotient by the ideal generated by the unique even prime 2, namely $\bigslant{\mathbb{Z}}{(2)}$, has two cosets whose elements partition $\mathbb{Z}$ into the even and odd integers. In the Eisenstein integers, of the six associates of norm 3, we distinguish the value $\beta = 1-\rho$ to play the role of the \textit{even} prime in $\zrho$. Unlike the $\mathbb{Z}$-setting, the quotient by the ideal generated by the even prime $\beta$, namely $\zrhomod{\beta}$, has three cosets whose elements partition $\zrho$ into three sets, which we call $\even$, $\odd_1$, and $\odd_2$. Theorem~\ref{thm:evenOdd} gives a simple characterization of when an Eisenstein integer is in one of these three sets. But first, we explicitly define the sets $\even$, $\odd_1$, and $\odd_2$.

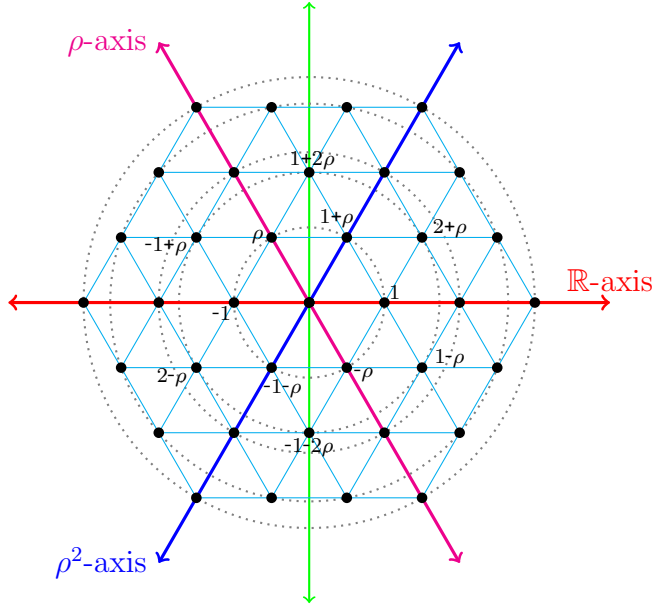
\begin{figure}
\centering
\begin{tikzpicture}
\draw[green, thick, <->] (0,-4) -- (0,4);
\draw[gray, thick, dotted] (0,0) circle (1cm);
\draw[gray, thick, dotted] (0,0) circle (1.73205cm);
\draw[gray, thick, dotted] (0,0) circle (2cm);
\draw[gray, thick, dotted] (0,0) circle (2.64575cm);
\draw[gray, thick, dotted] (0,0) circle (3cm);
\draw[cyan,thin] (2,1.73205) -- (-.5,-2.59807);
\draw[cyan,thin] (2.5,.86602) -- (.5,-2.59807);
\draw[cyan,thin] (3,0) -- (1.5,-2.59807);
\draw[cyan,thin] (.5,2.59807) -- (-2,-1.73205);
\draw[cyan,thin] (-.5,2.59807) -- (-2.5,-.86602);
\draw[cyan,thin] (-1.5,2.59807) -- (-3,0);
\draw[cyan,thin] (2,-1.73205) -- (-.5,2.59807);
\draw[cyan,thin] (2.5,-.86602) -- (.5,2.59807);
\draw[cyan,thin] (3,0) -- (1.5,2.59807);
\draw[cyan,thin] (.5,-2.59807) -- (-2,1.73205);
\draw[cyan,thin] (-.5,-2.59807) -- (-2.5,.86602);
\draw[cyan,thin] (-1.5,-2.59807) -- (-3,0);
\draw[cyan] (-1.5,2.59807) -- (1.5,2.59807);
\draw[cyan] (-2,1.73205) -- (2,1.73205)
		node[pos=.51,above=-3pt,black]{\scriptsize$1\plus2\rho$};
\draw[cyan] (-2.5,0.86602) -- (2.5,0.86602)
		node[pos=.119,below=-4pt,black]{\scriptsize$\minus1\plus\rho$}
		node[pos=.874,above=-4pt,black]{\scriptsize$2\plus\rho$};
\draw[cyan] (-2.5,-0.86602) -- (2.5,-0.86602)
		node[pos=.135,below=-4pt,black]{\scriptsize$2\minus\rho$}
		node[pos=.873,above=-4pt,black]{\scriptsize$1\minus\rho$};
\draw[cyan] (-2,-1.73205) -- (2,-1.73205)
		node[pos=.5,below=-2pt,black]{\scriptsize$\minus1\minus2\rho$};
\draw[cyan] (-1.5,-2.59807) -- (1.5,-2.59807);
\draw[red, very thick, <->] (-4,0) -- (4,0) node[above]{$\mathbb{R}$-axis}
node[pos=.355,below=-3pt,black]{\scriptsize$\minus1$}
node[pos=.642,above=-3pt,black]{\scriptsize$1$};
\foreach \x in {-3,-2,...,3} {
\fill (\x,0) circle (2pt);}
\draw[blue, very thick, <->] (2,3.46410) -- (-2,-3.46410) node[left]{$\rho^2$-axis}
node[pos=.339,left=-3pt,black]{\scriptsize$1\plus\rho$}
node[pos=.664,right=-3pt,black]{\scriptsize$\minus1\minus\rho$};
\foreach \x in {-1.5,-1,...,1.5} {
\fill (\x,1.732050808*\x) circle (2pt);}
\draw[magenta, very thick, <->] (2,-3.46410) -- (-2,3.46410) node[left]{$\rho$-axis}
node[pos=.364,right=-3pt,black]{\scriptsize$\minus\rho$}
node[pos=.626,left=-1.5pt,black]{\scriptsize$\rho$};
\foreach \x in {-1.5,-1,...,1.5} {
\fill (\x,-1.732050808*\x) circle (2pt);}
\fill (0,1.73205) circle (2pt);
\fill (0,-1.73205) circle (2pt);
\fill (1.5,.86605) circle (2pt);
\fill (-1.5,-.86605) circle (2pt);
\fill (1.5,-.86605) circle (2pt);
\fill (-1.5,.86605) circle (2pt);
\fill (2.5,.86605) circle (2pt);
\fill (2,1.73205) circle (2pt);
\fill (.5,2.59807) circle (2pt);
\fill (-.5,2.59807) circle (2pt);
\fill (-2,1.73205) circle (2pt);
\fill (-2.5,.86605) circle (2pt);
\fill (-2.5,-.86605) circle (2pt);
\fill (-2,-1.73205) circle (2pt);
\fill (-.5,-2.59807) circle (2pt);
\fill (.5,-2.59807) circle (2pt);
\fill (2,-1.73205) circle (2pt);
\fill (2.5,-.86605) circle (2pt);
\end{tikzpicture}
\caption{The Eisenstein integers with norm less than or equal to $9$.}
\label{fig:normsPlot}
\end{figure}

\begin{definition}\label{def:evenOdd}
The sets $\even$, $\odd_1$, and $\odd_2$ are defined as follows:
\begin{align*}
\even &= \{ a+b\rho \mid a+b\rho \equiv 0 \!\!\! \pmod{\beta}\}\\
\odd_1 &= \{ a+b\rho \mid a+b\rho \equiv 1 \!\!\! \pmod{\beta}\}\\
\odd_2 &= \{ a+b\rho \mid a+b\rho \equiv 2 \!\!\! \pmod{\beta}\}.
\end{align*}
\end{definition}

The following theorem gives a necessary and sufficient condition on the ``parity'' of an arbitrary $a+b\rho \in \zrho$ depending only on the sum $a+b$.

\begin{theorem}\label{thm:evenOdd}
Let $a+b\rho \in \zrho$. Then
\begin{align}
a+b\rho \in \even &\Longleftrightarrow a+b \equiv 0 \!\!\!\pmod{3}\label{thm:evenOdd_equivEven}\\
a+b\rho \in \odd_1 &\Longleftrightarrow a+b \equiv 1 \!\!\!\pmod{3}\label{thm:evenOdd_equivOdd1}\\
a+b\rho \in \odd_2 &\Longleftrightarrow a+b \equiv 2 \!\!\!\pmod{3}.\label{thm:evenOdd_equivOdd2}
\end{align}
\end{theorem}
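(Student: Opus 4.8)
The plan is to reduce the two-dimensional congruence modulo $\beta$ to a one-dimensional statement about the rational integer $a+b$, using the single observation that $\rho \equiv 1 \pmod{\beta}$. Since $\beta = 1-\rho$ we have $1 - \rho \equiv 0$, i.e.\ $\rho \equiv 1 \pmod{\beta}$, so for every $a+b\rho \in \zrho$,
$$ a + b\rho \;\equiv\; a + b\cdot 1 \;=\; a+b \pmod{\beta}. $$
Thus the ``parity'' class of $a+b\rho$ is governed entirely by the ordinary integer $a+b$, and the task becomes one of reducing $a+b$ modulo $\beta$.

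Next I would reduce $a+b$ itself. From the norm definition one computes $N(\beta) = N(1-\rho) = 3$, and since $3 = N(\beta) = \beta\overline{\beta}$ we get $\beta \mid 3$ in $\zrho$. Hence if $a+b = 3q + r$ with $r \in \{0,1,2\}$, then $3q \equiv 0$ and so $a+b \equiv r \pmod{\beta}$, where $r$ is precisely the residue of $a+b$ modulo $3$. Combining with the first step, $a+b\rho \equiv r \pmod{\beta}$ with $r \in \{0,1,2\}$, so every Eisenstein integer is congruent modulo $\beta$ to one of $0,1,2$.

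The only remaining point --- and, I expect, the genuinely substantive one --- is that $0,1,2$ represent three \emph{distinct} classes modulo $\beta$, so that the residue $r$ is unambiguous. This is where the norm does the work: multiplicativity gives that $\beta \mid \delta$ implies $3 = N(\beta) \mid N(\delta)$. The three differences $1-0$, $2-0$, and $2-1$ equal $1$, $2$, and $1$, with norms $1$, $4$, and $1$ respectively, none divisible by $3$; hence none is a multiple of $\beta$, and $0,1,2$ are pairwise incongruent modulo $\beta$.

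Finally I would assemble the equivalences. By the first two steps $a+b\rho \equiv r \pmod{\beta}$ where $r = (a+b)\bmod 3$, and by the distinctness just established this $r$ is the \emph{unique} representative in $\{0,1,2\}$ to which $a+b\rho$ is congruent. Reading this against Definition~\ref{def:evenOdd} then yields all three biconditionals simultaneously: $a+b\rho \in \even \iff r = 0 \iff a+b \equiv 0 \pmod{3}$, and likewise $a+b\rho \in \odd_1 \iff r = 1 \iff a+b\equiv 1\pmod 3$ and $a+b\rho \in \odd_2 \iff r = 2 \iff a+b\equiv 2\pmod 3$, establishing~\eqref{thm:evenOdd_equivEven}, \eqref{thm:evenOdd_equivOdd1}, and~\eqref{thm:evenOdd_equivOdd2}.
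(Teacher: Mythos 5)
Your proof is correct, and it takes a genuinely different route from the paper's. The paper fixes an arbitrary integer $c$ and tests divisibility directly: it rationalizes $\frac{a+b\rho-c}{1-\rho}$ by multiplying by $\frac{1-\rho^2}{1-\rho^2}$, reads off integrality as a pair of congruences modulo $3$, and shows those collapse to the single condition $a+b\equiv c\pmod 3$; substituting $c=0,1,2$ then gives all three biconditionals in one stroke, with no separate distinctness step needed since the whole chain is an if-and-only-if. You instead make two structural observations --- $\rho\equiv 1\pmod{\beta}$ collapses $a+b\rho$ to the rational integer $a+b$, and $\beta\divides 3$ (from $N(\beta)=3=\beta\overline{\beta}$) reduces that integer modulo $3$ --- and then, as you rightly flag, you must separately verify that $0,1,2$ are pairwise incongruent modulo $\beta$, which you do cleanly via multiplicativity of the norm. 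What your approach buys is conceptual economy: no explicit division computation, and as a byproduct you have essentially shown that $\zrhomod{\beta}$ has exactly the three classes $[0],[1],[2]$, a fact the paper only establishes later (Theorem~\ref{thm:equivClasses} with $m=0$... strictly, via the odd-power case). What the paper's computation buys is that the equivalence $a+b\rho\equiv c\pmod{\beta}\iff a+b\equiv c\pmod 3$ holds for \emph{every} integer $c$ at once, with both directions built in. Both arguments are complete and correct.
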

\begin{proof}
Consider the following sequence of equivalences, where $a+b\rho \in \zrho$ and $c \in \mathbb{Z}$.
\begin{align*}
a+b\rho \equiv c \!\!\!\pmod{\beta} &\Longleftrightarrow \frac{a+b\rho-c}{\beta} \in \zrho\\
&\Longleftrightarrow \frac{a+b\rho-c}{1-\rho}\cdot\frac{1-\rho^2}{1-\rho^2} \in \zrho\\
&\Longleftrightarrow \frac{(a+b\rho-c)(2+\rho)}{3} \in \zrho\\
&\Longleftrightarrow \frac{2a+2b\rho-2c+a\rho-b-b\rho-c\rho}{3} \in \zrho\\
&\Longleftrightarrow \frac{2a-b-2c}{3} + \frac{a+b-c}{3}\rho \in \zrho\\
&\Longleftrightarrow \begin{cases} 2a-b \equiv 2c \!\!\!\pmod{3}\\ a+b \equiv c \!\!\!\pmod{3}.\end{cases}
\end{align*}
Observe that the two congruences are equivalent since $$2a-b \equiv 2c \!\!\!\!\pmod{3} \; \Longleftrightarrow \; 4a-2b \equiv 4c \!\!\!\!\pmod{3} \; \Longleftrightarrow \; a+b \equiv c \!\!\!\!\pmod{3}.$$ Thus $a+b\rho \equiv c \pmod{\beta}$ if and only if $a+b \equiv c \pmod{3}$. Substituting 0, 1, and 2 for $c$ proves the equivalences \eqref{thm:evenOdd_equivEven}, \eqref{thm:evenOdd_equivOdd1}, and \eqref{thm:evenOdd_equivOdd2}, respectively, hold as desired.
\end{proof}

Using the characterization given in Theorem~\ref{thm:evenOdd}, we can plot the distribution of these three sets in the triangular lattice, as illustrated in Figure~\ref{fig:evenOddPlot}.

It is clear that the product of an even integer and any other element in $\mathbb{Z}$ being even in $\mathbb{Z}$ extends to the product of an $\even$ integer and any other element in $\zrho$ being $\even$ in $\zrho$. Just as in $\mathbb{Z}$, where the product of two odd integers is always odd, the same property occurs in $\zrho$. However, in $\zrho$ the ``odds'' split into two equivalence classes. The following theorem classifies the parity of a product given the parity of the ``odd'' multiplicands.

\begin{theorem}\label{thm:evenOddMult}
Let $\tau \in \odd_1$ and $\sigma \in \odd_2$. Then 
\begin{align}
\tau \cdot \tau \in \odd_1 \label{thm:evenOddMult_11}\phantom{.}\\
\sigma \cdot \sigma \in \odd_1 \label{thm:evenOddMult_22}\phantom{.}\\
\tau \cdot \sigma \in \odd_2 \label{thm:evenOddMult_12}.
\end{align}
\end{theorem}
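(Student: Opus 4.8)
The plan is to reduce everything to the arithmetic characterization already furnished by Theorem~\ref{thm:evenOdd}, so that the three multiplication rules become statements about coordinate sums modulo $3$. First I would set $\tau = a + b\rho$ and $\sigma = c + d\rho$ and record the hypotheses in their useful form: since $\tau \in \odd_1$ and $\sigma \in \odd_2$, Theorem~\ref{thm:evenOdd} gives $a + b \equiv 1 \pmod{3}$ and $c + d \equiv 2 \pmod{3}$.

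Next I would compute each of the three products explicitly in $\zrho$, using the defining relation $\rho^2 = -1 - \rho$ to return each product to the standard form $x + y\rho$. For $\tau^2 = (a+b\rho)^2$ this yields $(a^2 - b^2) + (2ab - b^2)\rho$; for $\sigma^2$ the analogous expression in $c,d$; and for $\tau\sigma = (a+b\rho)(c+d\rho)$ it yields $(ac - bd) + (ad + bc - bd)\rho$. To apply the reverse direction of Theorem~\ref{thm:evenOdd}, I only need the coordinate sum $x + y$ of each product modulo $3$.

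The key simplification, and the step I would highlight, is that each coordinate sum collapses nicely modulo $3$. Using $2 \equiv -1 \pmod{3}$ one checks that the coordinate sum of $\tau^2$ is congruent to $a^2 - ab + b^2 \equiv (a+b)^2 \pmod{3}$, that of $\sigma^2$ to $(c+d)^2$, and that of $\tau\sigma$ to $(a+b)(c+d)$. In other words the coordinate sum of a product is congruent mod $3$ to the product of the coordinate sums of the factors; this is exactly the statement that $a + b\rho \mapsto (a+b) \bmod 3$ is multiplicative, which is the ring-homomorphism content lurking behind Theorem~\ref{thm:evenOdd}. Substituting the hypotheses then gives coordinate sums congruent to $1^2 = 1$, $2^2 \equiv 1$, and $1 \cdot 2 = 2$ modulo $3$, and a final appeal to Theorem~\ref{thm:evenOdd} converts these into the memberships $\tau \cdot \tau \in \odd_1$, $\sigma \cdot \sigma \in \odd_1$, and $\tau \cdot \sigma \in \odd_2$, which are the three claims.

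There is no serious obstacle here; the only thing to be careful about is the bookkeeping in the $\rho^2 = -1 - \rho$ substitution and the mod-$3$ reductions. If one wanted to avoid coordinate computations altogether, the cleaner route is to observe directly from Definition~\ref{def:evenOdd} that $\even, \odd_1, \odd_2$ are the residue classes of $0$, $1$, and $2$ in the quotient $\zrhomod{\beta}$; since reduction mod $\beta$ is a ring homomorphism and $N(\beta) = 3$, this quotient is $\mathbb{Z}/3\mathbb{Z}$, and the three products reduce to $1 \cdot 1 = 1$, $2 \cdot 2 \equiv 1$, and $1 \cdot 2 = 2$ in $\mathbb{Z}/3\mathbb{Z}$.
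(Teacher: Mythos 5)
Your proposal is correct and follows essentially the same route as the paper: both reduce each product to standard form $x+y\rho$ and verify that the coordinate sum $x+y$ is congruent mod $3$ to the product of the factors' coordinate sums, then invoke Theorem~\ref{thm:evenOdd} in both directions. Your closing observation that this is really the statement that reduction modulo $\beta$ is a ring homomorphism onto $\mathbb{Z}/3\mathbb{Z}$ is a clean conceptual gloss the paper leaves implicit, but it does not change the substance of the argument.
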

\begin{proof}
Let $a+b\rho$ and $c+d\rho$ in $\zrho$ such that $a+b \equiv m \pmod{3}$ and $c+d \equiv n \pmod{3}$. Notice that $(a+b\rho)\cdot(c+d\rho) = (ac-bd) + (ad+bc-bd)\rho$. It suffices to show that $ac+ad+bc-2bd \equiv mn \pmod{3}$.

Since $a+b \equiv m \pmod{3}$ and $c+d \equiv n \pmod{3}$, then $(a+b)\cdot(c+d) \equiv mn \pmod{3}$ implies $ac+bc+ad+bd \equiv mn \pmod{3}$. Then $ac+bc+ad+bd-3bd \equiv mn \pmod{3}$ implies $ac+bc+ad-2bd \equiv mn \pmod{3}$ as desired.

Applying Theorem~\ref{thm:evenOdd}, it follows that \eqref{thm:evenOddMult_11} holds if $m=n=1$, \eqref{thm:evenOddMult_22} holds if $m=n=2$, and \eqref{thm:evenOddMult_12} holds if $m=1$ and $n=2$.
\end{proof}

\begin{figure}[H]
\centering
\resizebox{.8\totalheight}{!}{
\begin{tikzpicture}
\draw[black,very thick,dashed] (0,0) circle (4cm);
\draw[cyan] (2.5,2.59807) -- (-1,-3.46410);
\draw[cyan] (3,1.73205) -- (0,-3.46410);
\draw[cyan] (3.5,0.86602) -- (1,-3.46410);
\draw[cyan] (4,0) -- (2,-3.46410);
\draw[cyan] (1,3.46410) -- (-2.5,-2.59807);
\draw[cyan] (0,3.46410) -- (-3,-1.73205);
\draw[cyan] (-1,3.46410) -- (-3.5,-0.86602);
\draw[cyan] (-2,3.46410) -- (-4,0);
\draw[cyan] (2.5,-2.59807) -- (-1,3.46410);
\draw[cyan] (3,-1.73205) -- (0,3.46410);
\draw[cyan] (3.5,-0.86602) -- (1,3.46410);
\draw[cyan] (4,0) -- (2,3.46410);
\draw[cyan] (1,-3.46410) -- (-2.5,2.59807);
\draw[cyan] (0,-3.46410) -- (-3,1.73205);
\draw[cyan] (-1,-3.46410) -- (-3.5,0.86602);
\draw[cyan] (-2,-3.46410) -- (-4,0);
\draw[cyan] (-2,3.46410) -- (2,3.46410);
\draw[cyan] (-2.5,2.59807) -- (2.5,2.59807);
\draw[cyan] (-3,1.73205) -- (3,1.73205);
\draw[cyan] (-3.5,0.86602) -- (3.5,0.86602);
\draw[cyan] (-3.5,-0.86602) -- (3.5,-0.86602);
\draw[cyan] (-3,-1.73205) -- (3,-1.73205);
\draw[cyan] (-2.5,-2.59807) -- (2.5,-2.59807);
\draw[cyan] (-2,-3.46410) -- (2,-3.46410);
\draw[green, thick, <->] (0,-5) -- (0,5);
\draw[red, very thick, <->] (-5,0) -- (5,0) node[above]{$\mathbb{R}$-axis}
	node[pos=.599,below,black]{$1$};
\draw[blue, very thick, <->] (2.5,4.33012) -- (-2.5,-4.33012) node[left]{$\rho^2$-axis};
\draw[magenta, very thick, <->] (2.5,-4.33012) -- (-2.5,4.33012) node[left]{$\rho$-axis}
	node[pos=.577,left=2pt,black]{$\rho$};
\foreach \x in {-1,0,1} {
\fill[orange] (-3,1.732050808*\x) circle (3.5pt);
\fill[orange] (3,1.732050808*\x) circle (3.5pt);}
\foreach \x in {-1.5,...,1.5} {
\fill[orange] (-1.5,1.732050808*\x) circle (3.5pt);
\fill[orange] (1.5,1.732050808*\x) circle (3.5pt);}
\foreach \x in {-2,...,2} {
\fill[orange] (0,1.732050808*\x) circle (3.5pt);}
\foreach \x in {-1.5,-.5,.5,1.5} {
\fill[purple] ([xshift=-3.5pt,yshift=-3.5pt]-.5,1.732050808*\x) rectangle ++(7pt,7pt);
\fill[purple] ([xshift=-3.5pt,yshift=-3.5pt]2.5,1.732050808*\x) rectangle ++(7pt,7pt);}
\foreach \x in {-2,...,2} {
\fill[purple] ([xshift=-3.5pt,yshift=-3.5pt]-2,1.732050808*\x) rectangle ++(7pt,7pt);
\fill[purple] ([xshift=-3.5pt,yshift=-3.5pt],1.732050808*\x) rectangle ++(7pt,7pt);}
\foreach \x in {-.5,.5} {
\fill[purple] ([xshift=-3.5pt,yshift=-3.5pt]-3.5,1.732050808*\x) rectangle ++(7pt,7pt);}
\foreach \x in {0} {
\fill[purple] ([xshift=-3.5pt,yshift=-3.5pt]4,1.732050808*\x) rectangle ++(7pt,7pt);}
\foreach \x in {-1.5,...,1.5} {
\fill[olive] (-2.65,1.732050808*\x-.129903) -- (-2.35,1.732050808*\x-.129903) -- (-2.5,1.732050808*\x+.129903) -- cycle;
\fill[olive] (.35,1.732050808*\x-.129903) -- (.65,1.732050808*\x-.129903) -- (.5,1.732050808*\x+.129903) -- cycle;}
\foreach \x in {-2,...,2} {
\fill[olive] (-1.15,1.732050808*\x-.129903) -- (-.85,1.732050808*\x-.129903) -- (-1,1.732050808*\x+.129903) -- cycle;
\fill[olive] (1.85,1.732050808*\x-.129903) -- (2.15,1.732050808*\x-.129903) -- (2,1.732050808*\x+.129903) -- cycle;}
\foreach \x in {-.5,.5} {
\fill[olive] (3.35,1.732050808*\x-.129903) -- (3.65,1.732050808*\x-.129903) -- (3.5,1.732050808*\x+.129903) -- cycle;}
\foreach \x in {0} {
\fill[olive] (-3.85,1.732050808*\x-.129903) -- (-4.15,1.732050808*\x-.129903) -- (-4,1.732050808*\x+.129903) -- cycle;}
\end{tikzpicture}}
\caption{The distribution of $\even$, $\odd_1$, and $\odd_2$  integers with norm less than 16. Orange circles represent $\even$, purple squares represent $\odd_1$, and green triangles represent $\odd_2$ integers.}
\label{fig:evenOddPlot}
\end{figure}
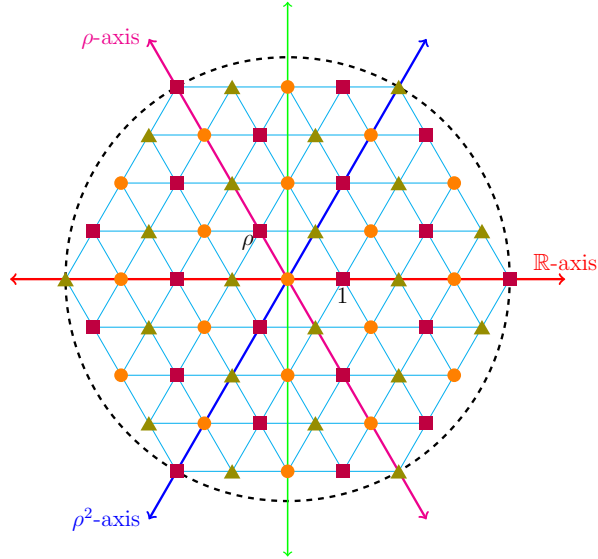

Whereas Theorem~\ref{thm:evenOdd} gives a necessary and sufficient condition for the parity of an Eisenstein integer $a+b\rho$ based solely on the residue class of the sum $a+b$ modulo $3$, the following theorem gives necessary (but not necessarily sufficient) conditions of the parity of $a+b\rho$ in terms of its norm.

\begin{theorem}\label{thm:parityNorm}
Let $a+b\rho \in \zrho$. Then
\begin{align}
a+b\rho \in \even &\Longleftrightarrow N(a+b\rho) \equiv 0 \!\!\!\pmod{3}\label{thm:parityNorm_equivEven}\\
a+b\rho \in \odd_1 &\Longrightarrow N(a+b\rho) \equiv 1 \!\!\!\pmod{3}\label{thm:parityNorm_equivOdd1}\\
a+b\rho \in \odd_2 &\Longrightarrow N(a+b\rho) \equiv 1 \!\!\!\pmod{3}.\label{thm:parityNorm_equivOdd2}
\end{align}
\end{theorem}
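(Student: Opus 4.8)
The plan is to reduce all three statements to a single arithmetic identity relating the norm to the sum $a+b$ modulo $3$, and then invoke the parity characterization already established in Theorem~\ref{thm:evenOdd}. The crucial observation is that
\[
N(a+b\rho) = a^2 - ab + b^2 = (a+b)^2 - 3ab \equiv (a+b)^2 \pmod{3},
\]
so the residue of the norm modulo $3$ is completely determined by the residue of $a+b$ modulo $3$ through the squaring map on $\mathbb{Z}_3$. Once this identity is spotted, the rest of the argument is a finite check, so I do not expect any serious obstacle.

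With the identity in hand, I would settle the forward implications by tabulating $(a+b)^2 \bmod 3$ against $a+b \bmod 3$: when $a+b \equiv 0$ we get $(a+b)^2 \equiv 0$, when $a+b \equiv 1$ we get $1$, and when $a+b \equiv 2$ we get $4 \equiv 1$. Combining each case with Theorem~\ref{thm:evenOdd} yields all three forward directions at once: membership of $a+b\rho$ in $\even$ forces $a+b \equiv 0$, hence $N(a+b\rho) \equiv 0$; membership in either $\odd_1$ or $\odd_2$ forces $a+b \equiv 1$ or $a+b \equiv 2$, and in both cases $N(a+b\rho) \equiv 1$. This establishes the right-pointing arrows in \eqref{thm:parityNorm_equivEven}, \eqref{thm:parityNorm_equivOdd1}, and \eqref{thm:parityNorm_equivOdd2}.

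For the converse in the even case—the only biconditional asserted—I would argue that $N(a+b\rho) \equiv 0 \pmod{3}$ gives $(a+b)^2 \equiv 0 \pmod{3}$; since $3$ is prime, this forces $a+b \equiv 0 \pmod{3}$, and Theorem~\ref{thm:evenOdd} then places $a+b\rho$ in $\even$, completing \eqref{thm:parityNorm_equivEven}. The one subtlety worth flagging is precisely this step: the converse genuinely uses the primality of $3$ (so that a square vanishing mod $3$ forces its base to vanish), and this is where a careless proof could slip.

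Finally, I would explain why \eqref{thm:parityNorm_equivOdd1} and \eqref{thm:parityNorm_equivOdd2} are stated as one-directional implications only. The squaring map on $\mathbb{Z}_3$ sends both $1$ and $2$ to $1$, so the norm cannot separate $\odd_1$ from $\odd_2$; knowing only that $N(a+b\rho) \equiv 1 \pmod{3}$ leaves $a+b \equiv 1$ and $a+b \equiv 2$ both possible. Hence no converse can hold for the two ``odd'' cases, which accounts for the asymmetry between \eqref{thm:parityNorm_equivEven} and the remaining two equivalences.
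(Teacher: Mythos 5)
Your proposal is correct and follows essentially the same route as the paper: both hinge on the identity $N(a+b\rho)=a^2-ab+b^2\equiv(a+b)^2\pmod{3}$ and then invoke Theorem~\ref{thm:evenOdd} for each residue of $a+b$. In fact you are slightly more careful than the paper on the converse of \eqref{thm:parityNorm_equivEven}, where the primality of $3$ is needed to pass from $(a+b)^2\equiv 0$ back to $a+b\equiv 0$; the paper leaves that step implicit.
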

\begin{proof}
Consider $a+b\rho \in \zrho$ where $a+b \equiv n \pmod{3}$. Then 
\begin{align*}
(a+b)(a+b) \equiv n\cdot n \!\!\!\!\pmod{3} &\Longleftrightarrow a^2+2ab+b^2 \equiv n^2 \!\!\!\!\pmod{3}\\
&\Longleftrightarrow a^2+2ab+b^2-3ab \equiv n^2 \!\!\!\!\pmod{3}\\
&\Longleftrightarrow a^2-ab+b^2 \equiv n^2 \!\!\!\!\pmod{3}\\
&\Longleftrightarrow N(a+b\rho) \equiv n^2 \!\!\!\!\pmod{3}.
\end{align*}

Applying Theorem~\ref{thm:evenOdd}, it follows that \eqref{thm:parityNorm_equivEven} holds if $n=0$, \eqref{thm:parityNorm_equivOdd1} holds if $n=1$, and \eqref{thm:parityNorm_equivOdd2} holds if $n=2$.
\end{proof}

\begin{remark}
Since Theorem~\ref{thm:parityNorm} determines the remainder when the norm of any Eisenstein integer $a+b\rho$ is divided by $3$, we conclude that $N(a+b\rho)$ is never congruent to $2$ modulo $3$.
\end{remark}


\subsection{The three types of Eisenstein primes}

This subsection outlines the three types of primes in $\zrho$ which are well known and can be found in many sources~\cite{Hardy1980,Ireland1990}.

\begin{proposition}\label{prop:primes}[Hardy and Wright~\cite[
Theorem~255]{Hardy1980}]
Every prime element $\gamma$ in $\zrho$ falls into one of three categories (up to associates):
\begin{enumerate}
\item the even prime $\gamma = 1-\rho$,
\item the rational primes $\gamma = p$ where $p$ is prime in $\mathbb{Z}$ and $p \equiv 2 \pmod{3}$, and
\item the nonassociate primes $\gamma$ and $\overline{\gamma}$ where $q$ is prime in $\mathbb{Z}$, the norm $N(\gamma)$ equals $q$, and $q \equiv 1 \pmod{3}$.
\end{enumerate}
\end{proposition}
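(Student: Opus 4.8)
The plan is to reduce the classification of Eisenstein primes to the factorization of the rational primes in $\zrho$, and then to separate the three cases according to the residue of the underlying rational prime modulo $3$.

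First I would observe that, since $\zrho$ is a unique factorization domain, every Eisenstein prime $\gamma$ divides exactly one rational prime. Indeed $\gamma \divides N(\gamma) = \gamma\overline{\gamma}$, a positive integer; factoring $N(\gamma)$ into rational primes and using that $\gamma$ is prime, $\gamma$ must divide some rational prime $p$. Uniqueness holds because two distinct rational primes are coprime in $\mathbb{Z}$, hence in $\zrho$, so $\gamma$ cannot divide both. It therefore suffices to determine, for each rational prime $p$, the Eisenstein primes dividing $p$. For such a prime $\gamma \divides p$, write $p = \gamma\delta$ and take norms to get $N(\gamma) \divides N(p) = p^2$, so $N(\gamma) \in \{p, p^2\}$ since $\gamma$ is not a unit. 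If $N(\gamma) = p^2$ then $N(\delta) = 1$, so $\delta$ is a unit and $\gamma \sim p$; that is, $p$ is itself prime (inert) in $\zrho$. If $N(\gamma) = p$ then $p = \gamma\overline{\gamma}$, and since any element of prime norm is irreducible, both $\gamma$ and $\overline{\gamma}$ are Eisenstein primes, so $p$ factors.

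The heart of the argument is to decide which alternative occurs, and I would do this through the isomorphism $\zrho \cong \mathbb{Z}[x]/(x^2+x+1)$ coming from the minimal polynomial of $\rho$, which gives $\zrho/(p) \cong \mathbb{F}_p[x]/(x^2+x+1)$. The factorization type of $p$ is then governed by how $x^2+x+1$ splits over $\mathbb{F}_p$, equivalently by the number of primitive cube roots of unity in the cyclic group $\mathbb{F}_p^\times$ of order $p-1$: if $p = 3$ there is a repeated root and $p$ ramifies; if $p \equiv 1 \pmod{3}$ then $3 \divides p-1$ yields two distinct roots and $p$ splits; and if $p \equiv 2 \pmod{3}$ then $x^2+x+1$ is irreducible and $p$ is inert. (Alternatively one can argue from the solvability of $a^2-ab+b^2 = p$, i.e.\ of $4p = (2a-b)^2 + 3b^2$, which reduces to whether $-3$ is a quadratic residue modulo $p$.) I expect this trichotomy, and in particular its clean dependence on $p \bmod 3$, to be the main obstacle, since it is exactly where one must invoke quadratic reciprocity or the cyclic structure of $\mathbb{F}_p^\times$.

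Finally I would pin down the three named cases. For $p = 3$ the element $\beta = 1-\rho$ satisfies $N(\beta) = 3$, and the short computation $\overline{\beta} = 2+\rho = 1-\rho^2 = -\rho^2\beta$ shows $\overline{\beta} \sim \beta$, so $3$ yields the single even prime of case (1). For $p \equiv 2 \pmod{3}$ the inert prime $\gamma \sim p$ gives case (2). For $p \equiv 1 \pmod{3}$ we have $N(\gamma) = p$ with $p = \gamma\overline{\gamma}$, and it remains to show $\gamma \not\sim \overline{\gamma}$; a direct check against the six units of $\zrho$ shows that $\overline{\gamma} = u\gamma$ for a unit $u$ forces $p = 3$, so $\gamma$ and $\overline{\gamma}$ are nonassociate primes with norm $q = p \equiv 1 \pmod 3$, giving case (3). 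Since every Eisenstein prime divides some rational prime and the three residues exhaust all rational primes, these cases are exhaustive and the classification follows.
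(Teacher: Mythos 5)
The paper does not actually prove this proposition: it is stated as a quotation of Hardy and Wright (Theorem~255) in a subsection that explicitly defers to the literature, so there is no in-paper argument to compare yours against. Your proof is correct and is the standard one: reduce to the rational prime $p$ with $\gamma \divides p$ (which exists and is unique since $\gamma \divides N(\gamma)$ and distinct rational primes are coprime), use the norm to see that $p$ is either inert or a product $\gamma\overline{\gamma}$ of primes of norm $p$, and decide between the two via $\zrho/(p) \cong \mathbb{F}_p[x]/(x^2+x+1)$ and the presence or absence of primitive cube roots of unity in $\mathbb{F}_p^\times$. The identifications at the end ($3 \sim \beta^2$ with $\overline{\beta} = -\rho^2\beta$; nonassociateness of $\gamma$ and $\overline{\gamma}$ when $q \equiv 1 \pmod 3$) are also right. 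If you write this up in full, the only places that merit a line or two more are (a) the dictionary between factorizations of $x^2+x+1$ over $\mathbb{F}_p$ and the primality of $(p)$ in $\zrho$ (namely that $(p)$ is prime exactly when the quotient is a domain), and (b) the unit-by-unit check that $\overline{\gamma} = u\gamma$ forces $3 \divides N(\gamma)$; both are routine but should not be left entirely implicit.
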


In Figure~\ref{fig:primePlot}, the three types of Eisenstein primes are plotted on the lattice. The even prime and its five associates lie on the solid orange circle. The rational primes $\gamma=2,5$ and their five associates lie on the two dashed purple circles. The primes $\gamma$ such that $N(\gamma)=7,13,19$ lie on the remaining three dotted olive circles.

\begin{figure}
\centering
\resizebox{.8\totalheight}{!}{
\begin{tikzpicture}
\draw[cyan] (3,3.46410) -- (-1.5,-4.33012);
\draw[cyan] (3.5,2.59807) -- (-.5,-4.33012);
\draw[cyan] (4,1.73205) -- (.5,-4.33012);
\draw[cyan] (4.5,0.86602) -- (1.5,-4.33012);
\draw[cyan] (5,0) -- (2.5,-4.33012);
\draw[cyan] (1.5,4.33012) -- (-3,-3.46410);
\draw[cyan] (0.5,4.33012) -- (-3.5,-2.59807);
\draw[cyan] (-0.5,4.33012) -- (-4,-1.73205);
\draw[cyan] (-1.5,4.33012) -- (-4.5,-0.86602);
\draw[cyan] (-2.5,4.33012) -- (-5,0);
\draw[cyan] (3,-3.46410) -- (-1.5,4.33012);
\draw[cyan] (3.5,-2.59807) -- (-.5,4.33012);
\draw[cyan] (4,-1.73205) -- (.5,4.33012);
\draw[cyan] (4.5,-0.86602) -- (1.5,4.33012);
\draw[cyan] (5,0) -- (2.5,4.33012);
\draw[cyan] (1.5,-4.33012) -- (-3,3.46410);
\draw[cyan] (0.5,-4.33012) -- (-3.5,2.59807);
\draw[cyan] (-0.5,-4.33012) -- (-4,1.73205);
\draw[cyan] (-1.5,-4.33012) -- (-4.5,0.86602);
\draw[cyan] (-2.5,-4.33012) -- (-5,0);
\draw[cyan] (-2.5,4.33012) -- (2.5,4.33012);
\draw[cyan] (-3,3.46410) -- (3,3.46410);
\draw[cyan] (-3.5,2.59807) -- (3.5,2.59807);
\draw[cyan] (-4,1.73205) -- (4,1.73205);
\draw[cyan] (-4.5,0.86602) -- (4.5,0.86602);
\draw[cyan] (-4.5,-0.86602) -- (4.5,-0.86602);
\draw[cyan] (-4,-1.73205) -- (4,-1.73205);
\draw[cyan] (-3.5,-2.59807) -- (3.5,-2.59807);
\draw[cyan] (-3,-3.46410) -- (3,-3.46410);
\draw[cyan] (-2.5,-4.33012) -- (2.5,-4.33012);
\draw[green, thick, <->] (0,-5.5) -- (0,5.5);
\draw[red, very thick, <->] (-6,0) -- (6,0) node[above]{$\mathbb{R}$-axis};
\draw[blue, very thick, <->] (3,5.19615) -- (-3,-5.19615) node[left]{$\rho^2$-axis};
\draw[magenta, very thick, <->] (3,-5.19615) -- (-3,5.19615) node[left]{$\rho$-axis};
\foreach \x in {1,...,6} {
\fill[orange] (\x*360/6 +30: 1.73205cm) circle (3.5pt); 
\fill[purple] (\x*360/6: 2cm) circle (3.5pt); 
\fill[purple] (\x*360/6: 5cm) circle (3.5pt); 
\fill[olive] (\x*360/6 +19.10660: 2.64575cm) circle (3.5pt); 
\fill[olive] (\x*360/6 +40.89339: 2.64575cm) circle (3.5pt); 
\fill[olive] (\x*360/6 +13.89788: 3.60555cm) circle (3.5pt); 
\fill[olive] (\x*360/6 +46.10211: 3.60555cm) circle (3.5pt); 
\fill[olive] (\x*360/6 +23.41322: 4.35889cm) circle (3.5pt); 
\fill[olive] (\x*360/6 +36.58677: 4.35889cm) circle (3.5pt); 
}
\draw[orange,thick] (0,0) circle (1.73205cm); 
\draw[purple,thick,dashed] (0,0) circle (2cm); 
\draw[purple,thick,dashed] (0,0) circle (5cm); 
\draw[olive,very thick,dotted] (0,0) circle (2.64575cm); 
\draw[olive,very thick,dotted] (0,0) circle (3.60555cm); 
\draw[olive,very thick,dotted] (0,0) circle (4.35889cm); 
\end{tikzpicture}}
\caption{All the Eisenstein primes with norms less than or equal to 19.}
\label{fig:primePlot}
\end{figure}
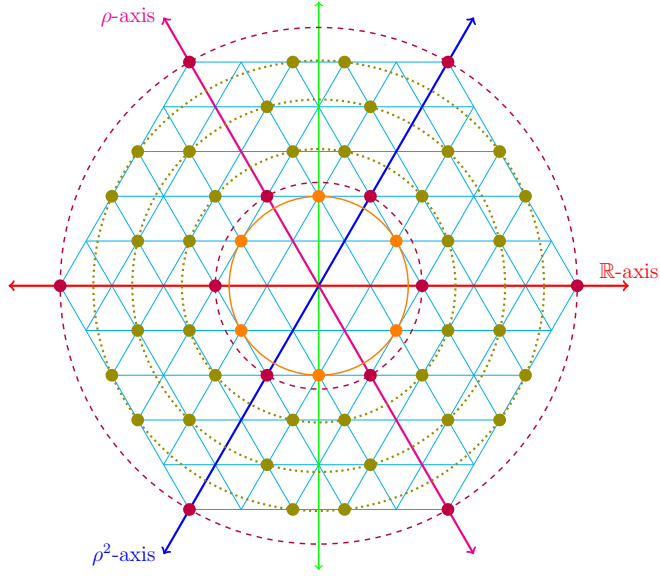

\bigskip

\boitegrise{
\begin{convention}\label{convention_on_notation}
\vspace{-1.8ex}
From this point forward, we let $\beta$ denote the prime $1-\rho$ and let $p$ and $q$ be primes in $\mathbb{Z}$ such that $p \equiv 2 \pmod{3}$ and $q \equiv 1 \pmod{3}$. Let $q = \psi\overline{\psi}$ where $\psi$ and $\overline{\psi}$ are non-associate prime factors of $q$ in $\zrho$. We also denote a generic prime in $\zrho$ by $\gamma$. \vspace{-.3in}
\end{convention}}{0.9\textwidth}
\vspace{1em}

We now establish the associates of $\beta^n$ for $n \in \mathbb{Z}$.

\begin{proposition}\label{prop:betasim}
One particular associate of $\beta^{2m}$ and $\beta^{2m+1}$, respectively, is given by $3^m$ and $3^m \beta$, respectively. That is, we have $\beta^{2m} \sim 3^m$ and $\beta^{2m+1} \sim 3^m\beta$.
\end{proposition}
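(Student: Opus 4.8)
The plan is to reduce the whole statement to a single computation of $\beta^2$ and then push that identity through the exponent by repeatedly absorbing units. First I would compute $\beta^2 = (1-\rho)^2 = 1 - 2\rho + \rho^2$ and invoke the defining relation $\rho^2 = -1-\rho$ (equivalently $\rho^2+\rho+1=0$, since $\rho$ is a primitive third root of unity) to simplify this to $\beta^2 = -3\rho$. Because $-\rho$ appears in the explicit list of units of $\zrho$ from Lemma~\ref{lem:units}, this identity already exhibits $\beta^2 = (-\rho)\cdot 3$, so $\beta^2 \sim 3$ with witnessing unit $-\rho$.

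For the even case I would raise this identity to the $m$-th power: $\beta^{2m} = (\beta^2)^m = (-3\rho)^m = 3^m(-\rho)^m$. Since the units of $\zrho$ form a multiplicative group (Lemma~\ref{lem:units}), the factor $(-\rho)^m$ is again a unit, and hence $\beta^{2m} \sim 3^m$, the required associate being witnessed by $(-\rho)^m$.

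For the odd case I would simply peel off one extra factor of $\beta$: $\beta^{2m+1} = \beta^{2m}\cdot\beta = 3^m(-\rho)^m\beta = (-\rho)^m\cdot 3^m\beta$. As above $(-\rho)^m$ is a unit, so $\beta^{2m+1} \sim 3^m\beta$, exactly as claimed.

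I do not anticipate a substantive obstacle: the entire argument rests on the lone identity $\beta^2 = -3\rho$ together with closure of the unit group under multiplication. The only points requiring care are to keep the associating unit $(-\rho)^m$ explicit (rather than silently replacing associates by equalities), and to note that since $-\rho$ is invertible the identities in fact persist for all $m \in \mathbb{Z}$, which matches the range of exponents flagged just before the statement and justifies using negative powers later if needed.
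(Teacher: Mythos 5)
Your proposal is correct and follows essentially the same route as the paper: both arguments reduce to the single identity $\beta^2 = -3\rho$ via $\rho^2 = -1-\rho$, absorb the resulting unit power (the paper multiplies by $(-\rho^2)^m$ to land exactly on $3^m$, while you keep the witnessing unit $(-\rho)^m$ explicit), and obtain the odd case by peeling off one factor of $\beta$. No gaps.
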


\begin{proof}
Observe that
\begin{align*}
\beta^{2m} = (\beta^2)^m &= ((1-\rho) (1-\rho))^m\\
&= (1 - 2\rho + \rho^2)^m\\
&= (1 - 2\rho + (-1-\rho))^m\\
&= (-3\rho)^m\\
&\sim (-\rho^2)^m(-3\rho)^m\\
&= (3\rho^3)^m\\
&= 3^m
\end{align*}
Hence $\beta^{2m} \sim 3^m$. As a consequence, we have $\beta^{2m+1} \sim 3^m\beta$.
\end{proof}


\subsection{Unique factorization in \texorpdfstring{$\zrho$}{Z-rho}}

As noted in the introduction, the Eisenstein integer ring is a unique factorization domain. Unlike its analog, the integers, where a natural number clearly factors uniquely into products of powers of primes, factoring an Eisenstein integer is not as straightforward as the following example displays.

\begin{example}\label{exam:eta_as_product_of_powers_of_primes}
Consider $\eta = 48 - 72 \rho \in \zrho$. We will factor this uniquely (up to associates) as a product of powers of primes. We start by factoring out all the powers of $\beta$. Notice that $48 - 72 = -24 \equiv 0 \pmod{3}$, so $\eta$ is an $\even$ integer and we can factor out at least one instance of $\beta$. Since $\frac{48-72\rho}{\beta} = 56-8\rho$, we have $\eta = \beta \cdot (56-8\rho)$. Notice that our multiplicand $56-8\rho$ is $\even$ as well, so we factor out another instance of $\beta$ resulting in $\eta = \beta^2 \cdot (40+16\rho)$. Now our new multiplicand is not $\even$ since $40+16=56 \equiv 2 \pmod{3}$, so we have factored out all possible powers of $\beta$.

Next we factor out Category 2 primes from $40+16\rho$. Notice that $2^3$ divides both $40$ and $16$, yielding $\eta = \beta^2 \cdot 2^3 \cdot (5+2\rho)$ since $2$ is a Category 2 prime.

Lastly, we factor $5+2\rho$ into Category 3 primes. Notice that $N(5+2\rho) = 19$. Since $19$ is a rational prime and $19 \equiv 1 \pmod{3}$, we conclude that $5+2\rho$ is itself a Category 3 prime. Thus the unique factorization of $\eta$ is $$\eta = \beta^2 \cdot 2^3 \cdot (5+2\rho).$$
\end{example}


\section{Classes of \texorpdfstring{$\zrho/(\gamma^n)$}{Z-rho mod gamma} for a prime \texorpdfstring{$\gamma$}{gamma} in \texorpdfstring{$\zrho$}{Z-rho}}

Computing the Euler phi function on an integer $n \in \mathbb{Z}$, using the unit group definition of $\phi(n)$, is a relatively easy task as it is straightforward to find a nice set of coset representatives in $\bigslant{\mathbb{Z}}{(n)}$---for example, we can just take the $n$ least residue class members $0, 1, \ldots, n-1$ as our representatives. This is aided by the natural ordering possessed by the ring $\mathbb{Z}$. However, since this natural ordering does not exist in $\zrho$, it is not a straight-forward task to find a complete set of residue classes for $\zrhomod{\eta}$ where $\eta$ is an arbitrary Eisenstein integer. To that end, we first find a complete set of residue classes for $\zrhomod{\gamma^n}$ where $\gamma$ is prime in $\zrho$.

\subsection{Equivalence classes of \texorpdfstring{$\zrho/(\gamma^n)$}{Z-rho mod gamma}}

Before we find the complete set of residue classes for each quotient $\zrhomod{\gamma^n}$ where $\gamma$ is prime in $\zrho$, we prove the following lemma.

\begin{lemma}\label{lem:intDivideEisen}
If $n \in \mathbb{Z}$ and $a+b\rho \in \zrho$, then $n \divides a+b\rho$ if and only if $n \divides a$ and $n \divides b$. 
\end{lemma}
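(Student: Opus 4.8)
The plan is to prove the two implications separately, with the reverse direction being an immediate factorization and the forward direction resting on the uniqueness of the coordinate representation of an Eisenstein integer in the $\mathbb{Z}$-basis $\{1,\rho\}$.

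For the reverse direction, I would assume $n \divides a$ and $n \divides b$ in $\mathbb{Z}$, write $a = na'$ and $b = nb'$ for some $a', b' \in \mathbb{Z}$, and observe that $a + b\rho = n(a' + b'\rho)$. Since $a' + b'\rho \in \zrho$, this exhibits $n \divides a + b\rho$ in $\zrho$, and nothing more is needed.

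For the forward direction, I would assume $n \divides a + b\rho$ in $\zrho$, so that there is some $c + d\rho \in \zrho$ with $a + b\rho = n(c + d\rho) = nc + (nd)\rho$. The crucial point is that every element of $\zrho$ has a \emph{unique} expression as $x + y\rho$ with $x, y \in \mathbb{Z}$, because $\rho$ is a nonreal complex number and hence $1$ and $\rho$ are linearly independent over $\mathbb{Q}$ (equivalently, $\rho$ has the degree-two minimal polynomial $x^2 + x + 1$). Comparing the real and rho parts in the equation $a + b\rho = nc + (nd)\rho$ then forces $a = nc$ and $b = nd$, so $n \divides a$ and $n \divides b$ in $\mathbb{Z}$, as desired.

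There is no genuine obstacle here; the only step requiring care is to invoke the uniqueness of the $\{1,\rho\}$-representation explicitly rather than silently equating coefficients. I would state this uniqueness as the pivotal fact, since it is precisely what allows a single divisibility relation in the quadratic ring $\zrho$ to descend to the pair of divisibility conditions in $\mathbb{Z}$.
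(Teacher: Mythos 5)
Your proposal is correct and follows essentially the same route as the paper's proof, which also writes $a+b\rho = n(c+d\rho)$ and equates coefficients to get $nc=a$ and $nd=b$. The only difference is that you make explicit the uniqueness of the $\{1,\rho\}$-representation, a point the paper's proof uses silently.
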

\begin{proof}
Since $n \divides a+b\rho$, we know $n(c+d\rho) = a+b\rho$ for some $c+d\rho \in \zrho$. Then $cn=a$ and $dn=b$. Hence $n \divides a$ and $n \divides b$. The converse holds similarly.
\end{proof}

\begin{theorem}\label{thm:equivClasses}
The complete sets of residue classes of $\zrhomod{\gamma^n}$ are given by:
\begin{align}
&\zrhomod{\beta^{2m}} = \{ [a+b\rho] \mid 0 \leq a,b \leq 3^m-1 \}, \label{thm:equivClasses_1}\\
&\zrhomod{\beta^{2m+1}} = \{ [a+b\rho] \mid 0 \leq a \leq 3^{m+1}-1 \text{\, and \, } 0 \leq b \leq 3^m-1 \}, \label{thm:equivClasses_2}\\
&\zrhomod{p^n} = \{ [a+b\rho] \mid 0 \leq a,b \leq p^n-1 \}, \label{thm:equivClasses_3}\\
&\zrhomod{\psi^n} = \{ [a] \mid 0 \leq a \leq q^n-1 \}. \label{thm:equivClasses_4}
\end{align}
\end{theorem}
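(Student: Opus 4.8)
The plan is to establish each of the four cases by showing two things for each proposed set $S$ of representatives: first, that the residue classes listed are pairwise distinct modulo $\gamma^n$, and second, that every element of $\zrho$ is congruent to one of them. Equivalently, since the index $[\zrho : (\gamma^n)] = N(\gamma^n) = N(\gamma)^n$ counts exactly the number of residue classes, it suffices to show the listed representatives are pairwise incongruent \emph{and} that $|S|$ equals $N(\gamma)^n$; distinctness plus the correct cardinality forces completeness. I would verify the cardinality count first as a sanity check: for \eqref{thm:equivClasses_1} we have $|S| = 3^m \cdot 3^m = 9^m = (3^m)^2$, and by Proposition~\ref{prop:betasim}, $\beta^{2m} \sim 3^m$, so $N(\beta^{2m}) = N(3^m) = (3^m)^2$ (since $N(3) = 9$ and $3 = N(\beta)$ gives $N(\beta^{2m}) = 3^{2m}$); similarly for \eqref{thm:equivClasses_2}, $|S| = 3^{m+1} \cdot 3^m = 3^{2m+1} = N(\beta)^{2m+1} = N(\beta^{2m+1})$. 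For \eqref{thm:equivClasses_3}, $|S| = (p^n)^2 = N(p)^n$ since $N(p) = p^2$, and for \eqref{thm:equivClasses_4}, $|S| = q^n = N(\psi)^n$ since $N(\psi) = q$. These all match, which is reassuring and means I only need to prove distinctness and coverage, of which one will follow from the other by the pigeonhole-type counting argument.

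For the coverage/reduction direction, the key technical tool is a reduction algorithm: given an arbitrary $c + d\rho \in \zrho$, I want to subtract off a multiple of $\gamma^n$ to land in the prescribed box. In each case the associate computation from Proposition~\ref{prop:betasim} is what makes this tractable. For \eqref{thm:equivClasses_1}, since $\beta^{2m} \sim 3^m$, the ideals $(\beta^{2m})$ and $(3^m)$ coincide, so $\zrhomod{\beta^{2m}} = \zrhomod{3^m}$; then by Lemma~\ref{lem:intDivideEisen}, reduction modulo the \emph{rational integer} $3^m$ acts independently on the real part and the rho part, immediately giving that $\{a + b\rho \mid 0 \le a, b \le 3^m - 1\}$ is a complete distinct set. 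The same idea handles \eqref{thm:equivClasses_3}: $p^n$ is already a rational integer, so Lemma~\ref{lem:intDivideEisen} applies directly and reduces the problem to reducing each coordinate modulo $p^n$. Case \eqref{thm:equivClasses_2} is the hybrid: writing $\beta^{2m+1} \sim 3^m \beta$, I would first reduce the rho part modulo $3^m$ (this is where the bound $0 \le b \le 3^m - 1$ comes from) and then reduce the real part modulo $3^{m+1}$, using that multiplying by $\beta$ and the relation $\rho^2 = -1-\rho$ let me trade a $3^m$-multiple of $\rho$ against real shifts by $3^{m+1}$; this interplay is what produces the asymmetric bounds.

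Case \eqref{thm:equivClasses_4} is the one I expect to be the main obstacle, and it is genuinely different from the others because the representatives are purely rational integers $[a]$ with no $\rho$ term at all. The crux is that modulo $\psi$ we have $\rho \equiv r$ for some ordinary integer $r$: since $\psi \mid q$ and $q \equiv 1 \pmod 3$, the cyclotomic polynomial $x^2 + x + 1$ splits modulo $q$, so $\rho$ is congruent to a root $r \in \mathbb{Z}$ of $x^2 + x + 1 \pmod{q}$ in the quotient. I would prove by induction on $n$ that $\rho \equiv r_n \pmod{\psi^n}$ for some integer $r_n$ (lifting the root via a Hensel-type step, which works because the derivative $2x+1$ is a unit modulo $\psi$ as $q \ne 3$), so that every $c + d\rho$ collapses to a rational integer $c + d r_n$ modulo $\psi^n$, which I then reduce modulo $q^n$. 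The distinctness of $[0], [1], \dots, [q^n - 1]$ then amounts to checking that $\psi^n \nmid a$ for $0 < a < q^n$, equivalently that the smallest positive rational integer in the ideal $(\psi^n)$ is $q^n$ — this follows because $(\psi^n) \cap \mathbb{Z} = (q^n)$, using $\psi \overline{\psi} = q$ with $\psi, \overline{\psi}$ nonassociate so that $\psi^n \mid a$ in $\zrho$ forces $q^n \mid a$ in $\mathbb{Z}$. Assembling the Hensel lift, the collapse to rational representatives, and this intersection fact is the real content of the theorem.
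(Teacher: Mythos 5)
Your proposal is correct in substance, and for cases \eqref{thm:equivClasses_1}--\eqref{thm:equivClasses_3} it runs along essentially the same lines as the paper: pass to the associates $3^m$ and $3^m\beta$ via Proposition~\ref{prop:betasim}, use Lemma~\ref{lem:intDivideEisen} to reduce the real and rho parts independently modulo a rational integer, and in the odd-power case trade multiples of $3^m\rho$ against real shifts using $3^m\beta = 3^m - 3^m\rho$. You diverge in two places. First, your counting shortcut --- distinctness plus $|S| = N(\gamma)^n$ forces completeness --- leans on the standard fact that the index of $(\theta)$ in $\zrho$ equals $N(\theta)$; that fact is true but is neither proved nor cited in the paper (the theorem is, in effect, establishing it for prime powers), so it should be referenced if used as more than a sanity check; since you supply the explicit reduction arguments anyway, nothing is lost. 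Second, and more substantively, your case \eqref{thm:equivClasses_4} is structurally different: you Hensel-lift a root $r$ of $x^2+x+1$ modulo $q$ to a root $r_n$ modulo $q^n$ (valid since $(2r+1)^2 \equiv -3 \not\equiv 0 \pmod{q}$) and conclude $\rho \equiv r_n \pmod{\psi^n}$, whereas the paper writes $\psi^n = x - y\rho$, proves $q \ndivides y$, and inverts $y$ modulo $q^n$ to solve $\rho \equiv zx \pmod{\psi^n}$. Your route is more conceptual and generalizes to other cyclotomic settings; the paper's is more computational but entirely self-contained. The one step you should make explicit in the Hensel approach is why all $n$ factors of $\psi$ in $q^n \divides (r_n-\rho)(r_n-\rho^2)$ land in a single factor: this requires $\psi \ndivides (\rho - \rho^2)$, which holds because $N(\rho-\rho^2) = 3 \neq q$ --- the same fact underlying your derivative condition, but it must be invoked again at this point, since otherwise you only obtain $\psi^k \divides r_n - \rho$ for some $k < n$. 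Your distinctness argument for \eqref{thm:equivClasses_4} via $(\psi^n) \cap \mathbb{Z} = (q^n)$ matches the paper's conjugation argument exactly.
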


\begin{proof}
For each set of equivalence classes, we show they are distinct and nonrepeating representatives. There are four cases:

\noindent\textbf{CASE 1:} It suffices to show that the $3^{2m}$ classes $$\{ [a+b\rho] \mid 0 \leq a,b \leq 3^m-1 \}$$ of $\zrhomod{\beta^{2m}}$ are distinct and nonrepeating. By Proposition~\ref{prop:betasim}, we know $\beta^{2
m}$ and $3^m$ are associates, so it follows that $\zrhomod{\beta^{2m}} = \zrhomod{3^m}$. Consider two representatives $a+b\rho$ and $c+d\rho$ of the same class such that $0 \leq a,b,c,d \leq 3^m-1$. Thus $[a+b\rho] = [c+d\rho]$ implies $a+b\rho \equiv c+d\rho \pmod{3^m}$. Then $3^m \divides (a-c) + (b-d)\rho$ implies $3^m \divides a-c$ and $3^m \divides b-d$ by Lemma~\ref{lem:intDivideEisen}. Since $0 \leq a,b,c,d \leq 3^m-1$ we have $|a-c| \leq 3^m-1$, and thus $3^m \divides a-c$ forces $a=c$. By a similar argument $b=d$ follows. Therefore the $3^{2m}$ classes in \eqref{thm:equivClasses_1} are distinct.

\noindent\textbf{CASE 2:} It suffices to show that the $3^{2m+1}$ classes $$\{ [a+b\rho] \mid 0 \leq a \leq 3^{m+1}-1 \text{\, and \, } 0 \leq b \leq 3^m-1 \}$$ of $\zrhomod{\beta^{2m+1}}$ are distinct and nonrepeating.  By Proposition~\ref{prop:betasim}, we know $\beta^{2m+1}$ and $3^m\beta$ are associates, so it follows that $\zrhomod{\beta^{2m+1}} = \zrhomod{3^m\beta}$. Consider two representatives $a+b\rho$ and $c+d\rho$ of the same class such that $0 \leq a,c \leq 3^{m+1}-1$ and $0 \leq b,d \leq 3^m-1$. Thus $[a+b\rho] = [c+d\rho]$ implies $a+b\rho \equiv c+d\rho \pmod{\beta^{2m+1}}$. Then $\beta^{2m+1} \divides (a-c) + (b-d)\rho$. Since $3^m\beta$ is an associate of $\beta^{2m+1}$, then $3^m\beta \divides (a-c) + (b-d)\rho$. In particular $3^m \divides b-d$ by Lemma~\ref{lem:intDivideEisen}. Since $0 \leq b,d \leq 3^m-1$ we have $|b-d| \leq 3^m-1$, and thus $3^m \divides b-d$ forces $b=d$. 

Now since $b=d$ and $3^m\beta \divides (a-c) + (b-d)\rho$, we know that $3^m\beta \divides a-c$, which implies $3^m \divides a-c$. Recall that $0 \leq a,c \leq 3^{m+1}-1$. Hence $3^m \divides a-c$ implies that either $|a-c| = 3^m$ or $|a-c| = 0$. If $|a-c| = 3^m$, then $3^m\beta \divides 3^m$ implies $\beta \divides 1$. Thus $\beta$ is a unit, which is a contradiction. Hence $|a-c| = 0$, which forces $a=c$. Therefore the $3^{2m+1}$ classes in \eqref{thm:equivClasses_2} are distinct.

\noindent\textbf{CASE 3:} It suffices to show that the $p^{2n}$ classes $$\{ [a+b\rho] \mid 0 \leq a,b \leq p^n-1 \}$$ of $\zrhomod{p^n}$ are distinct and nonrepeating. Consider two representatives $a+b\rho$ and $c+d\rho$ of the same class such that $0 \leq a,b,c,d \leq p^n-1$. Thus $[a+b\rho] = [c+d\rho]$ implies $a+b\rho \equiv c+d\rho \pmod{p^n}$. Then $p^n \divides (a-c) + (b-d)\rho$ implies $p^n \divides a-c$ and $p^n \divides b-d$ by Lemma~\ref{lem:intDivideEisen}. Since $0 \leq a,b,c,d \leq p^n-1$ we have $|a-c| \leq p^n-1$, and thus $p^n \divides a-c$ forces $a=c$. By a similar argument $b=d$ follows. Therefore the $p^{2n}$ classes in \eqref{thm:equivClasses_3} are distinct.

\noindent\textbf{CASE 4:} It suffices to show that the $q^n$ classes $$\{ [a] \mid 0 \leq a \leq q^n-1 \}$$ of $\zrhomod{\psi^n}$ are distinct and nonrepeating. Consider two representatives $a$ and $b$ of the same class such that $0 \leq a,b \leq q^n-1$. Thus $[a] = [b]$ implies $a \equiv b \pmod{\psi^n}$. Then $\psi^n \divides a-b$. Let $\psi^n\delta = a-b$ for some $\delta \in \zrho$. Then $\overline{\psi^n\delta} = \overline{a-b} = a-b$ implies $\overline{\psi}^n\overline{\delta} = a-b$ and thus $\overline{\psi}^n \divides a-b$. Since $\psi$ and $\overline{\psi}$ are nonassociate primes and both $\psi^n$ and $\overline{\psi}^n$ divide $a-b$, then $\psi^n\overline{\psi}^n \divides a-b$. Thus $q^n \divides a-b$. Since $0 \leq a,b \leq q^n-1$ we have $|a-b| \leq q^n-1$, and thus $q^n \divides a-b$ forces $a=b$. Therefore the $q^n$ classes in \eqref{thm:equivClasses_4} are distinct.

We now show that an arbitrary element $\theta = a+b\rho$ in $\zrho$ belongs to exactly one of the classes in each of the four sets.

\noindent\textbf{CASE $\mathbf{1^\prime}$:} Consider $x,y \in \mathbb{Z}$ such that $x \equiv a \pmod{3^m}$ and $y \equiv b \pmod{3^m}$ where $0 \leq x,y \leq 3^m-1$. Then $\theta = a+b\rho \equiv x+y\rho \pmod{3^m}$ implies $\theta \in [x+y\rho]$ in \eqref{thm:equivClasses_1}.

\noindent\textbf{CASE $\mathbf{2^\prime}$:} Consider $x,y \in \mathbb{Z}$ such that $x \equiv a \pmod{3^{m+1}}$ and $y \equiv b \pmod{3^{m+1}}$ where $0 \leq x,y \leq 3^{m+1}-1$. This implies that $\theta = a+b\rho \equiv x+y\rho \pmod{3^{m+1}}$. If $y < 3^m$, then $\theta \in [x+y\rho]$ in $\zrhomod{\beta^{2m+1}}$. On the other hand, if $y \geq 3^m$, then
\begin{align*}
x+y\rho &= x+y\rho +3^m\beta -3^m\beta\\
&= x+y\rho + 3^m(1-\rho) -3^m\beta\\
&= x+y\rho + 3^m - 3^m\rho -3^m\beta\\
&= (x+3^m) + (y-3^m)\rho -3^m\beta.
\end{align*}
So $\theta \equiv (x+3^m) + (y-3^m)\rho \pmod{3^m\beta}$ where $0 \leq y-3^m \leq 3^m-1$. Observe that $x+3^m$ can be reduced by multiples of $3^{m+1}$ yielding $x' \equiv x+3^m \pmod{3^{m+1}}$ where $0 \leq x' \leq 3^{m+1}-1$. Therefore it follows that $\theta \in [x' + (y-3^m)\rho]$ in \eqref{thm:equivClasses_2}.

\noindent\textbf{CASE $\mathbf{3^\prime}$:} Consider $x,y \in \mathbb{Z}$ such that $x \equiv a \pmod{p^n}$ and $y \equiv b \pmod{p^n}$ where \phantom{zzzzz} $0 \leq x,y \leq p^n-1$. Then $\theta = a+b\rho \equiv x+y\rho \pmod{p^n}$ implies $\theta \in [x+y\rho]$ in \eqref{thm:equivClasses_3}.

\noindent\textbf{CASE $\mathbf{4^\prime}$:} It suffices to show that $a,b$ and $\rho$ each belong to one of the classes in $$\{ [c] \mid 0 \leq c \leq q^n-1\}.$$ Clearly we can reduce $a$ and $b$ by multiples of $q^n$ so that $a \in [c_1]$ and $b \in [c_2]$ where $0 \leq c_1,c_2 \leq q^n-1$. Now we show $\rho$ belongs to one of the desired classes. Let $\psi^n = x-y\rho$ so that $y\rho \equiv x \pmod{\psi^n}$. Notice that $q \ndivides y$, for if $q \divides y$, then $\psi\overline{\psi} \divides y$ implies $\psi \divides y$. Since $\psi$ is prime, then $y\rho \equiv x \pmod{\psi^n}$ implies $y\rho \equiv x \pmod{\psi}$. Thus $\psi \divides x$. Then $\overline{\psi} \divides x$, so $\psi\overline{\psi} \divides x$ since $\psi$ and $\overline{\psi}$ are distinct primes. Thus $q \divides x$. Since $q \divides x$ and $q \divides y$, we get $q \divides x-y\rho$ implies $q \divides \psi^n$. But $q = \psi\overline{\psi}$ yields a contradiction. Hence $q \ndivides y$ so $q$ and $y$ are relatively prime, which implies $q^n$ and $y$ are relatively prime. Thus the congruence $zy \equiv 1 \pmod{q^n}$ is solvable in $\mathbb{Z}$. 
Recall $y\rho \equiv x \pmod{\psi^n}$. Then $zy\rho \equiv zx \pmod{\psi^n}$. Since $zy \equiv 1 \pmod{q^n}$, then $zy \equiv 1 \pmod{\psi^n\overline{\psi}^n}$ implies $zy - 1 = \psi^n\overline{\psi}^n\eta$ for some $\eta \in \zrho$. So $zy - 1 = \psi^n(\overline{\psi}^n\eta)$ implies $zy \equiv 1 \pmod{\psi^n}$. Then $zy\rho \equiv \rho \pmod{\psi^n}$. We have $zy\rho \equiv zx \pmod{\psi^n}$ and $zy\rho \equiv \rho \pmod{\psi^n}$, yielding $\rho \equiv zx \pmod{\psi^n}$. Since $\psi^n \divides q^n$, then $q^n$ is a multiple of $\psi^n$. Thus $zx$ can be reduced by multiples of $q^n$, yielding $c_3 \equiv zx \pmod{q^n}$ where $0 \leq c_3 \leq q^n-1$. Hence $\rho \in [c_3]$ in $\zrhomod{\psi^n}$. Since $a,b$ and $\rho$ each belong to the classes $[c_1], [c_2],$ and $[c_3]$, respectively, then let $C \equiv c_1 + c_2c_3 \pmod{q^n}$ where $0 \leq C \leq q^n-1$. We conclude that $\theta \in [C]$ in \eqref{thm:equivClasses_4}.
\end{proof}


\subsection{Criteria for when two classes in \texorpdfstring{$\zrho/(\gamma^n)$}{Z-rho mod gamma} are equivalent}

In the following four theorems, we give ways to find equivalent equivalence classes for each of the quotients $\zrhomod{\gamma^n}$ where $\gamma$ is prime in $\zrho$. Furthermore, these theorems allow us to show when two Eisenstein integers represent the same equivalence class.

\begin{remark}
There is no analog of these four theorems in Cross' work on the Gaussian integers~\cite{Cross1983}, but they are implicitly implied in some examples he gives. In particular, our Theorem~\ref{thm:classesInBetaOdd} shows that the equivalence of two classes in $\bigslant{\zrho}{(\beta^{2m+1})}$ is more restrictive than one might expect.
\end{remark}

\begin{theorem}\label{thm:classesInBetaEven}
If $[a+b\rho] \in \zrhomod{\beta^{2m}}$, then $[a+b\rho] = [(a+3^mk) + (b+3^mj)\rho]$ for all $k,j \in \mathbb{Z}$. 
\end{theorem}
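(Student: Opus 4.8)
The plan is to collapse the statement to a one-line divisibility check by exploiting Proposition~\ref{prop:betasim}. The key observation is that $\beta^{2m} \sim 3^m$, so the principal ideals $(\beta^{2m})$ and $(3^m)$ coincide in $\zrho$. Consequently, two Eisenstein integers determine the same class in $\zrhomod{\beta^{2m}}$ if and only if their difference is divisible by $3^m$, and this is the condition I would verify directly.

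First I would record the ideal identity $\zrhomod{\beta^{2m}} = \zrhomod{3^m}$ coming from Proposition~\ref{prop:betasim}, together with the elementary fact that divisibility is insensitive to multiplication by a unit (so $\beta^{2m} \divides \delta$ and $3^m \divides \delta$ are equivalent for any $\delta \in \zrho$). This reduces the claim to showing that $(a+b\rho)$ and $(a+3^mk)+(b+3^mj)\rho$ are congruent modulo $3^m$.

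Next I would simply subtract the two representatives and factor:
$$\big((a+3^mk)+(b+3^mj)\rho\big) - (a+b\rho) = 3^m k + 3^m j\rho = 3^m(k+j\rho).$$
Since $k+j\rho \in \zrho$, this difference is a $\zrho$-multiple of $3^m$, so $3^m$ divides it. Hence the two elements are congruent modulo $3^m$, and therefore modulo the associate $\beta^{2m}$, which gives $[a+b\rho] = [(a+3^mk)+(b+3^mj)\rho]$ for all $k,j \in \mathbb{Z}$, as desired.

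There is no genuine obstacle here: the entire content is the passage from $\beta^{2m}$ to its integer associate $3^m$, after which the result is immediate. The only point requiring a moment's care is justifying that divisibility by $\beta^{2m}$ and by $3^m$ are interchangeable; once that is stated, the factorization $3^m(k+j\rho)$ finishes the argument. (If a converse characterization were wanted—namely that every pair of equal classes arises this way—it would follow from the analysis of Case~1 in Theorem~\ref{thm:equivClasses}, but the statement as given only requires the sufficiency direction.)
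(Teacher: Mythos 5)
Your proposal is correct and follows essentially the same route as the paper's proof: both invoke Proposition~\ref{prop:betasim} to replace $\beta^{2m}$ by its associate $3^m$ and then observe that the difference of the two representatives factors as $3^m(k+j\rho)$. There is nothing to add.
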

\begin{proof}
Let $a+b\rho \in \zrho$ and $k,j \in \mathbb{Z}$. Then
\begin{align*}
(a+3^mk) + (b+3^mj)\rho &= (a+b\rho) + (3^mk+3^mj\rho)\\
&= (a+b\rho) + 3^m(k+j\rho).
\end{align*}
So $(a+3^mk) + (b+3^mj)\rho \equiv a+b\rho \pmod{3^m}$. By Lemma~\ref{prop:betasim}, we know $\beta^{2m} \sim 3^m$. Hence $(a+3^mk) + (b+3^mj)\rho \equiv a+b\rho \pmod{\beta^{2m}}$, and the result follows.
\end{proof}

\begin{theorem}\label{thm:classesInBetaOdd}
If $[a+b\rho] \in \bigslant{\zrho}{(\beta^{2m+1})}$, then $[a+b\rho] = [(a+3^mk) + (b+3^mj)\rho]$ for $k,j \in \mathbb{Z}$ such that $k+j \equiv 0 \pmod{3}$. 
\end{theorem}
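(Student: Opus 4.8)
The plan is to prove the congruence $(a+3^mk)+(b+3^mj)\rho \equiv a+b\rho \pmod{\beta^{2m+1}}$ under the hypothesis $k+j\equiv 0\pmod 3$, in direct parallel with the even case (Theorem~\ref{thm:classesInBetaEven}), but accounting for the extra factor of $\beta$ in the modulus. First I would reduce the problem to divisibility: since
\begin{align*}
(a+3^mk)+(b+3^mj)\rho &= (a+b\rho) + 3^m(k+j\rho),
\end{align*}
it suffices to show that $\beta^{2m+1} \divides 3^m(k+j\rho)$. By Proposition~\ref{prop:betasim} we have $\beta^{2m+1}\sim 3^m\beta$, so this is equivalent to showing $3^m\beta \divides 3^m(k+j\rho)$, which after cancelling the common factor $3^m$ reduces to the single requirement $\beta \divides k+j\rho$.

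The heart of the matter is thus to verify that $\beta = 1-\rho$ divides $k+j\rho$ precisely when $k+j\equiv 0\pmod 3$. This is exactly the content of Theorem~\ref{thm:evenOdd} (specifically equivalence~\eqref{thm:evenOdd_equivEven}): an element $k+j\rho\in\zrho$ lies in $\even$, i.e. is divisible by $\beta$, if and only if $k+j\equiv 0\pmod 3$. So I would simply invoke that theorem with the integer coefficients $k$ and $j$ in place of $a$ and $b$. Under the stated hypothesis $k+j\equiv 0\pmod 3$, we conclude $\beta\divides k+j\rho$, hence $3^m\beta\divides 3^m(k+j\rho)$, hence $\beta^{2m+1}\divides 3^m(k+j\rho)$, and therefore $(a+3^mk)+(b+3^mj)\rho \equiv a+b\rho\pmod{\beta^{2m+1}}$, giving the claimed equality of classes.

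I do not expect a serious obstacle here, since every ingredient is already established; the only subtlety worth flagging is the one highlighted in the preceding remark, namely that this condition is genuinely more restrictive than in the even case. In Theorem~\ref{thm:classesInBetaEven} the shifts by $3^m$ in the real and rho parts are independent (any $k,j\in\mathbb{Z}$ work), whereas here the extra factor of $\beta$ forces the coupling $k+j\equiv 0\pmod 3$. It would be worth remarking that this is why the theorem is stated as a one-directional implication rather than a full characterization: not every pair $(k,j)$ produces the same class, and the hypothesis $k+j\equiv 0\pmod 3$ is exactly the divisibility condition $\beta\divides k+j\rho$ needed to absorb the leftover $\beta$ in the modulus $3^m\beta$.
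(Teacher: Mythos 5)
Your proposal is correct and follows essentially the same route as the paper's proof: both reduce the difference of the two representatives to $3^m(k+j\rho)$, use $\beta^{2m+1}\sim 3^m\beta$ to cancel the factor $3^m$, and then invoke the parity characterization of Theorem~\ref{thm:evenOdd} to identify $\beta \divides k+j\rho$ with $k+j\equiv 0\pmod 3$. Your closing observation about the coupling of $k$ and $j$ matches the remark the paper makes just before stating the theorem.
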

\begin{proof}
Let $a+b\rho \in \zrho$ and $k,j \in \mathbb{Z}$. Observe that
\begin{align*}
[a+b\rho] = [(a+3^mk) + (b+3^mj)\rho] &\Longleftrightarrow a+b\rho \equiv (a+3^mk) + (b+3^mj)\rho \!\!\!\pmod{\beta^{2m+1}}\\
&\Longleftrightarrow \frac{((a+3^mk) + (b+3^mj)\rho) - (a+b\rho)}{\beta^{2m+1}} \in \zrho.
\end{align*}
So it suffices to show when $$\frac{((a+3^mk) + (b+3^mj)\rho) - (a+b\rho)}{\beta^{2m+1}}\in\zrho.$$ Observe that
$$\frac{((a+3^mk) + (b+3^mj)\rho) - (a+b\rho)}{\beta^{2m+1}} = \frac{3^mk + 3^mj\rho}{3^m\beta} = \frac{k+j\rho}{\beta}.$$
Thus $k+j\rho \in \even$, so $k+j \equiv 0 \pmod{3}$ as desired.
\end{proof}

\begin{theorem}\label{thm:classesInP}
If $[a+b\rho] \in \zrhomod{p^n}$, then $[a+b\rho] = [(a+p^nk) + (b+p^nj)\rho]$ for all $k,j \in \mathbb{Z}$. 
\end{theorem}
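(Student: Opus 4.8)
The plan is to mirror the proof of Theorem~\ref{thm:classesInBetaEven}, computing the difference of the two representatives directly and observing that it is divisible by $p^n$. In fact the argument here is even cleaner, since $p^n$ is already a rational integer and we need not pass through any associate relation (as we did for $\beta^{2m} \sim 3^m$).

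First I would fix $a+b\rho \in \zrho$ and $k,j \in \mathbb{Z}$, and form the difference
\[
\big((a+p^nk) + (b+p^nj)\rho\big) - (a+b\rho) = p^nk + p^nj\rho = p^n(k+j\rho).
\]
Since $k+j\rho \in \zrho$, this exhibits the difference as $p^n$ times an Eisenstein integer, so $p^n \divides \big((a+p^nk)+(b+p^nj)\rho\big) - (a+b\rho)$. Hence the two elements are congruent modulo $p^n$, which gives $[a+b\rho] = [(a+p^nk)+(b+p^nj)\rho]$ as claimed.

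There is no genuine obstacle to overcome here. The only conceptual point worth flagging is the contrast with Theorem~\ref{thm:classesInBetaOdd}: there the shift $3^mk + 3^mj\rho$ had to be divisible by $\beta^{2m+1} \sim 3^m\beta$, and after cancelling $3^m$ one was left needing $\beta \divides k+j\rho$, which by Theorem~\ref{thm:evenOdd} forced the congruence $k+j \equiv 0 \pmod 3$. In the present case, because $p^n$ itself (rather than merely one of its associates carrying an extra prime factor) divides the shift, no such restriction on $k,j$ emerges, and the equality of classes holds for all $k,j \in \mathbb{Z}$.
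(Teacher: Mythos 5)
Your proposal is correct and matches the paper's own proof essentially verbatim: both compute the difference $p^nk + p^nj\rho = p^n(k+j\rho)$ and conclude congruence modulo $p^n$ directly, with no associate relation needed. The extra remark contrasting this with the $\beta^{2m+1}$ case is accurate but not part of the paper's argument.
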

\begin{proof}
Let $a+b\rho \in \zrho$ and $k,j \in \mathbb{Z}$. Then
\begin{align*}
(a+p^nk) + (b+p^nj)\rho &= (a+b\rho) + (p^nk+p^nj\rho)\\
&= (a+b\rho) + p^n(k+j\rho).
\end{align*}
Hence $(a+p^nk) + (b+p^nj)\rho \equiv a+b\rho \pmod{p^n}$, and the result follows.
\end{proof}

\begin{theorem}\label{thm:classesInPsi}
For $\psi^n = x+y\rho$ and $\psi^n\overline{\psi}^n = q^n$, if $[a+b\rho] \in \zrhomod{\psi^n}$ and $c \in \mathbb{Z}$ then $[a+b\rho] = [c]$ when the congruence $-ay+bx \equiv -cy \pmod{q^n}$ holds.
\end{theorem}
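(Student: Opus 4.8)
The plan is to reduce the claimed class equality to a divisibility statement and then rationalize, exactly as in the proofs of Theorems~\ref{thm:evenOdd} and~\ref{thm:classesInBetaOdd}. First I would observe that $[a+b\rho] = [c]$ in $\zrhomod{\psi^n}$ is equivalent to $\psi^n \divides (a-c) + b\rho$, i.e.\ to $\frac{(a-c)+b\rho}{\psi^n} \in \zrho$. To test this membership I would clear the denominator by multiplying numerator and denominator by $\overline{\psi}^n = (x-y)-y\rho$ and using $\psi^n\overline{\psi}^n = q^n$, so that
$$\frac{(a-c)+b\rho}{\psi^n} = \frac{\big((a-c)+b\rho\big)\overline{\psi}^n}{q^n}.$$
Expanding the numerator via $\rho^2 = -1-\rho$ yields a real part $C_1 = (a-c)x-(a-c)y+by$ and a rho part $C_2 = bx-(a-c)y$, so the quotient lies in $\zrho$ exactly when $q^n \divides C_1$ and $q^n \divides C_2$.

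The second condition, $q^n \divides C_2$, is precisely the hypothesis $-ay+bx \equiv -cy \pmod{q^n}$ after collecting terms, so it holds by assumption. The crux is therefore to show the real-part condition $q^n \divides C_1$ is forced as well. Here I would exploit the norm identity $x^2-xy+y^2 = N(\psi^n) = q^n$ through the algebraic relation
$$x\,C_1 - y\,C_2 = (a-c)\,(x^2-xy+y^2) = (a-c)\,q^n,$$
which I expect to confirm by direct expansion. Since $q^n \divides C_2$, this gives $x\,C_1 \equiv 0 \pmod{q^n}$.

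The main obstacle is the final step: passing from $q^n \divides x\,C_1$ to $q^n \divides C_1$, which requires $q \ndivides x$. I would establish this by the same primality reasoning used in Case 4 of the proof of Theorem~\ref{thm:equivClasses}: if $q \divides x$, then $q^n = x^2-xy+y^2$ forces $q \divides y^2$ and hence $q \divides y$, so $q = \psi\overline{\psi}$ divides $\psi^n = x+y\rho$, whence $\overline{\psi} \divides \psi^{n-1}$, contradicting that $\psi$ and $\overline{\psi}$ are nonassociate primes in the UFD $\zrho$. Thus $q \ndivides x$, so $\gcd(x,q^n)=1$ and $x\,C_1 \equiv 0 \pmod{q^n}$ yields $C_1 \equiv 0 \pmod{q^n}$. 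With both divisibility conditions in hand, the quotient lies in $\zrho$, giving $\psi^n \divides (a-c)+b\rho$ and therefore $[a+b\rho]=[c]$, as desired.
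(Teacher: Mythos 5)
Your proof is correct, and it follows the same overall skeleton as the paper's: interpret $[a+b\rho]=[c]$ as $\psi^n \divides (a-c)+b\rho$, rationalize by $\overline{\psi}^n$ using $\psi^n\overline{\psi}^n=q^n$, and arrive at exactly the same two congruences (your $C_1\equiv 0$ and $C_2\equiv 0 \pmod{q^n}$ are the paper's real-part and rho-part conditions). Where you diverge is in showing that the rho-part congruence forces the real-part one. The paper manipulates the congruences using the modular inverse $(x-y)^{-1}$ together with the relation $-y^2\equiv x(x-y)\pmod{q^n}$ coming from $q^n=x^2-xy+y^2$; this tacitly assumes $q\ndivides x-y$ (and, to reverse the chain of implications in the direction the theorem actually needs, $q\ndivides y$), neither of which is justified there. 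Your route instead uses the exact identity $xC_1-yC_2=(a-c)(x^2-xy+y^2)=(a-c)q^n$, which reduces everything to the single invertibility fact $q\ndivides x$, and you prove that fact explicitly by the nonassociate-primes argument. Both identities are correct (I verified the expansion $xC_1-yC_2=(a-c)q^n$), but your version is the tighter one: it proves the implication in the direction the theorem requires and closes the invertibility gap that the paper leaves implicit.
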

\begin{proof}
Let $a+b\rho \in \zrho$ and $c \in \mathbb{Z}$. Consider $\psi^n = x+y\rho$ where $\psi^n\overline{\psi^n} = q^n$. Observe that
\begin{align*}
[a+b\rho] = [c] &\Longleftrightarrow a+b\rho \equiv c \!\!\!\pmod{\psi^n}\\
&\Longleftrightarrow \frac{(a+b\rho) - c}{\psi^n} \in \zrho.
\end{align*}
So it suffices to show when $\frac{(a+b\rho) - c}{\psi^n}$ is in $\zrho$. Observe that
\begin{align*}
\frac{(a+b\rho) - c}{\psi^n} &= \frac{a+b\rho - c}{\psi^n}\cdot\frac{\overline{\psi}^n}{\overline{\psi}^n}\\
&= \frac{(a+b\rho-c)(x+y\rho^2)}{q^n}\\
&= \frac{ax + bx\rho - cx - ay - ay\rho + by + cy + cy\rho}{q^n}\\
&= \frac{ax-cx-ay+by+cy}{q^n} + \frac{bx-ay+cy}{q^n}\rho
\end{align*}
Thus $\frac{(a+b\rho) - c}{\psi^n}$ is in $\zrho$ when the following congruences hold:
\begin{center}
$\begin{cases} 
a(x-y)+by \equiv c(x-y) &\pmod{q^n}\\ 
-ay+bx \equiv -cy &\pmod{q^n}.
\end{cases}$
\end{center}
We now show that the two congruences are equivalent. Starting with the top congruence, observe that
\begin{align}
&a(x-y)+by \equiv c(x-y) \!\!\!\pmod{q^n}\nonumber\\ 
&\hspace{.5in}\Longrightarrow (-y)(x-y)^{-1}\left(a(x-y)+by\right) \equiv (-y)(x-y)^{-1}c(x-y) \!\!\!\pmod{q^n}\nonumber\\
&\hspace{.5in}\Longrightarrow -ay + (-y^2)(x-y)^{-1}(b) \equiv -cy \!\!\!\pmod{q^n}\nonumber\\
&\hspace{.5in}\Longrightarrow -ay+bx \equiv -cy \!\!\!\pmod{q^n}\label{line:lem_classesInPsi}.
\end{align}
Implication~\eqref{line:lem_classesInPsi} holds since 
\begin{align*}
q^n = x^2+y^2-xy &\Longrightarrow 0 \equiv x^2+y^2-xy \!\!\!\pmod{q^n}\\
&\Longrightarrow -y^2 \equiv x(x-y) \!\!\!\pmod{q^n}\\
&\Longrightarrow (x-y)^{-1}(-y^2) \equiv x \!\!\!\pmod{q^n}.
\end{align*}
Hence the two congruences are equivalent, so $\frac{(a+b\rho) - c}{\psi^n} \in \zrho$ when $-ay+bx \equiv -cy \pmod{q^n}$ and the result follows. 
\end{proof}

\begin{example}
Consider $\psi = 3+\rho$ and $q=7$ and let $[17-3\rho] \in \zrhomod{\psi^2}$. Then $\psi^2 = 8+5\rho$ and $q^2=49$. We want to find $[c] \in \zrhomod{\psi^2}$ where $c \in \mathbb{Z}$ such that $[17-3\rho] = [c]$. By Theorem~\ref{thm:classesInPsi}, we need to solve the congruence $$(-17) \cdot 5 + (-3) \cdot 8 \equiv -c \cdot 5 \pmod{49}.$$ Observe that
\begin{align*}
(-17) \cdot 5 + (-3) \cdot 8 \equiv -c \cdot 5 \pmod{49} &\Longrightarrow -109 \equiv -5c \pmod{49}\\
&\Longrightarrow 11 \equiv 5c \pmod{49}\\
&\Longrightarrow 110 \equiv 50c \pmod{49}\\
&\Longrightarrow 110 \equiv c \pmod{49}\\
&\Longrightarrow 12 \equiv c \pmod{49}.
\end{align*}
Hence $[17-3\rho] = [12] \in \zrhomod{8+5\rho}$. One can confirm that this is the case by showing that $17-3\rho$ and $12$ differ by a multiple of $8+5\rho$, specifically $-\rho$.
\end{example}


\subsection{Units in \texorpdfstring{$\zrho/(\gamma^n)$}{Z-rho mod gamma}}

We now find the units in each of the quotient rings $\bigslant{\zrho}{(\gamma^n)}$ where $\gamma$ is prime in $\zrho$.

\begin{theorem}\label{thm:unitsInRings}
Let $a, b \in \mathbb{Z}$. Then the units in $\zrhomod{\gamma^n}$ are given by
\begin{align}
&[a+b\rho] \text{ where } a + b \not\equiv 0 \!\!\!\pmod{3} &&\text{ if } \gamma=\beta \label{thm:unitsInRings_1}\\
&[a+b\rho] \text{ where } \gcd{(a,p)}=1 \text{ or } \gcd{(b,p)}=1 &&\text{ if } \gamma=p \label{thm:unitsInRings_2}\\
&[a] \text{ where } \gcd{(a,q)}=1 &&\text{ if } \gamma=\psi \label{thm:unitsInRings_3}
\end{align}
\end{theorem}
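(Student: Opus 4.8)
The plan is to characterize units in each quotient $\zrhomod{\gamma^n}$ by exploiting the fact that in a finite commutative ring, a residue class $[\theta]$ is a unit if and only if it is \emph{not} a zero divisor, which in turn happens precisely when $\theta$ shares no common prime factor with $\gamma^n$. Since $\gamma$ is prime, this reduces to: $[\theta]$ is a unit if and only if $\gamma \nmid \theta$. So for each of the three prime types I would translate the single divisibility condition $\gamma \nmid \theta$ into the stated arithmetic condition on the integer coordinates of $\theta = a+b\rho$.

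For the even prime case \eqref{thm:unitsInRings_1}, where $\gamma = \beta$, the condition $\beta \nmid a+b\rho$ is exactly the statement that $a+b\rho \notin \even$, which by Theorem~\ref{thm:evenOdd} is equivalent to $a+b \not\equiv 0 \pmod{3}$; this is immediate. For the rational prime case \eqref{thm:unitsInRings_2}, where $\gamma = p$ with $p$ a rational prime, I would use Lemma~\ref{lem:intDivideEisen}: $p \divides a+b\rho$ if and only if $p \divides a$ and $p \divides b$. Hence $p \nmid a+b\rho$ iff $p \nmid a$ or $p \nmid b$, which (since $p$ is prime) is the same as $\gcd(a,p)=1$ or $\gcd(b,p)=1$. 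For the split prime case \eqref{thm:unitsInRings_3}, by Theorem~\ref{thm:equivClasses} the classes are represented by integers $[a]$, and I would need $\psi \nmid a$. Since $a \in \mathbb{Z}$ and $\psi\overline{\psi} = q$, one checks that $\psi \divides a$ forces $\overline{\psi} \divides a$ (by conjugating), hence $q \divides a$; so $\psi \nmid a$ is equivalent to $q \nmid a$, i.e.\ $\gcd(a,q)=1$.

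The one point requiring genuine justification is the opening equivalence ``unit $\Leftrightarrow$ $\gamma \nmid \theta$.'' The forward direction (a unit cannot be divisible by $\gamma$, else $[\theta]$ is a zero divisor via $[\theta]\cdot[\gamma^{n-1}\cdot(\text{associate of }\gamma)]$) is clean. The reverse direction—that coprimality to $\gamma^n$ yields invertibility—follows because $\zrho$ is a PID, so $\gcd(\theta,\gamma^n)=1$ gives a Bézout relation $\theta\mu + \gamma^n\nu = 1$, whence $[\theta][\mu]=[1]$ in $\zrhomod{\gamma^n}$. I would state this Bézout argument explicitly since it is the structural backbone; everything after it is a direct translation using the earlier lemmas and parity theorem.

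The main obstacle, and the step I would be most careful with, is the reverse (non-divisibility implies unit) direction together with the split-prime reduction to $q \nmid a$. The subtlety in the split case is that the residues are indexed by rational integers $a$, yet invertibility is governed by $\psi$, not $q$; I must argue carefully that for an \emph{integer} $a$, divisibility by the complex prime $\psi$ already forces divisibility by the full rational prime $q$. The conjugation trick ($\psi \divides a \Rightarrow \overline{\psi}=\overline{\psi}\,\overline{1} \divides \overline{a}=a$, and $\psi,\overline{\psi}$ non-associate primes give $q=\psi\overline{\psi}\divides a$) is exactly the mechanism used in CASE 4 of Theorem~\ref{thm:equivClasses}, so I would invoke the same reasoning here to keep the argument self-contained.
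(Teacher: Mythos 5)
Your proposal is correct and follows essentially the same route as the paper: reduce ``$[\theta]$ is a unit'' to $\gcd(\theta,\gamma)=1$ (equivalently $\gamma\nmid\theta$) via a B\'ezout relation, then translate that divisibility condition case by case using Theorem~\ref{thm:evenOdd} for $\beta$, Lemma~\ref{lem:intDivideEisen} for $p$, and the conjugation argument for $\psi$. If anything, your treatment of the split-prime case is slightly more complete than the paper's, since you explicitly supply the direction $\psi\divides a \Rightarrow q\divides a$ for a rational integer $a$, which the paper leaves implicit.
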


\begin{proof}
Let $\gamma$ be prime in $\zrho$. Let $\theta \in \zrho$ and consider the class $[\theta]$ in $\zrhomod{\gamma^n}$. Then 
\begin{align*}
[\theta] \text{ is a unit in } \zrhomod{\gamma^n} &\Longleftrightarrow [\theta][\delta] = [1] \text{ for some } [\delta] \in \zrhomod{\gamma^n}\\
&\Longleftrightarrow \theta\delta \equiv 1 \pmod{\gamma^n}\\
&\Longleftrightarrow \theta\delta + \gamma^n\eta = 1 \text{ for some } \eta \in \zrho\\
&\Longleftrightarrow \theta\delta + \gamma(\gamma^{n-1}\eta) = 1\\
&\Longleftrightarrow \gcd{(\theta, \gamma)} = 1.
\end{align*}
\noindent\textbf{CASE 1:} $(\gamma=\beta)$ Let $[a+b\rho]$ in $\zrhomod{\beta^n}$. Then $[a+b\rho]$ is a unit if and only if $\gcd{(a+b\rho,\beta)} = 1$, that is, if and only if $\beta$ does not divide $a+b\rho$. Note the latter implies that $a+b\rho \notin \even$. By the contrapositive of equivalence \eqref{thm:evenOdd_equivEven} of Theorem~\ref{thm:evenOdd}, it follows that \eqref{thm:unitsInRings_1} holds.

\noindent\textbf{CASE 2:} $(\gamma=p)$ Let $[a+b\rho]$ in $\zrhomod{p^n}$. Then $[a+b\rho]$ is a unit if and only if $\gcd{(a+b\rho,p)} = 1$, that is, if and only if $p$ does not divide $a+b\rho$. By Lemma~\ref{lem:intDivideEisen}, we have $p \ndivides a+b\rho$ if and only if $p \ndivides a$ or $p \ndivides b$, or equivalently, $\gcd{(a,p)} = 1$ or $\gcd{(b,p)} = 1$. Hence \eqref{thm:unitsInRings_2} holds.

\noindent\textbf{CASE 3:} $(\gamma=\psi)$ Let $[a]$ in $\zrhomod{\psi^n}$. Then $[a]$ is a unit if and only if $\gcd{(a,\psi)} = 1$, that is, if and only if $\psi$ does not divide $a+b\rho$. But $\psi \ndivides a$ implies $\psi\overline{\psi} \ndivides a$, or equivalently $q \ndivides a$. Hence $[a]$ is a unit if and only if $\gcd{(a,q)} = 1$, so \eqref{thm:unitsInRings_3} holds.
\end{proof}

\begin{example}
Consider $\zrhomod{\beta^2}$. By Theorem~\ref{thm:equivClasses}, we have $$\zrhomod{\beta^2} = \{ [a+b\rho] \mid 0 \leq a,b \leq 2 \} = \{ [0], [1], [2], [\rho], [1+\rho], [2+\rho], [2\rho], [1+2\rho], [2+2\rho] \}.$$ By Theorem~\ref{thm:unitsInRings}, the units in $\zrhomod{\beta^2}$ are given by $$\zrhomodunit{\beta^2}= \{ [1], [2], [\rho], [1+\rho], [2\rho], [2+2\rho] \}.$$ To find the inverse of $[2+2\rho]$ in $\zrhomod{\beta^2}$ it suffices to solve $(2+2\rho)x \equiv 1 \pmod{\beta^2}$. This statement is equivalent to $(2+2\rho)x + \beta^2(-k) = 1$ for some $k \in \zrho$. Then using $\beta^2 = -3\rho$ and the Euclidean algorithm, we have
\begin{align*}
2+2\rho &= \rho(-3\rho) + (-1-\rho)\\
\rho &= (-1-\rho)(-1-\rho) + 0.
\end{align*}
So $-1-\rho = (2+2\rho) - \rho(-3\rho)$ implies $-1-\rho = (2+2\rho) + \beta^2(-\rho)$. Multiplying both sides by $\rho$ yields $1 = (2+2\rho)\rho + \beta^2(-\rho^2)$. Hence $x=\rho$, so $[\rho]$ is the multiplicative inverse of $[2+2\rho]$ in $\zrhomod{\beta^2}$.
\end{example}


\section{An Euler phi function \texorpdfstring{$\ephi$}{phi-sub-rho} for the Eisenstein integers}\label{sec:Eulerphi}

We are now ready to compute the values of the Euler phi function on the Eisenstein integers. Analogous to the integer setting we have the following definition.

\begin{definition}\label{def:ephi}
Let $\eta \in \zrho-\{0\}$. We define the \textit{Euler phi function} $\ephi$ on $\eta$ to be the cardinality of $\zrhomodunit{\eta}$.
\end{definition}


\subsection{Euler phi function on powers of primes in \texorpdfstring{$\zrho$}{Z-rho}}\label{subsec:Eulerphi_powersOfPrimes}

In order to introduce the Euler phi function $\ephi$ for the Eisenstein integers in this section, we need to first compute the number of units in $\zrhomod{\gamma^n}$ for an arbitrary prime $\gamma \in \zrho$ and $n \in \mathbb{N}$. The following lemma is helpful in proving the subsequent theorem.

\begin{lemma}\label{lem:pchunks}
Let $p$ be prime in $\mathbb{Z}$ and $n\geq1$. Then there are exactly $p^{n-1}$ nonnegative integers less than $p^n$ that are congruent to $k$ modulo $p$ for each $0 \leq k \leq p-1$.
\end{lemma}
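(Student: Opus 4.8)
The plan is to prove a completely elementary counting fact about residues modulo $p$ among the integers $0, 1, \ldots, p^n - 1$. The claim is that for each fixed residue $k$ with $0 \leq k \leq p-1$, exactly $p^{n-1}$ of these $p^n$ integers satisfy $x \equiv k \pmod{p}$. First I would observe that the integers $\{0, 1, \ldots, p^n - 1\}$ are precisely the $p^n$ distinct residues modulo $p^n$, and I want to sort them by their residue modulo $p$.

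The cleanest approach is a direct bijective/parametrization argument. An integer $x$ with $0 \leq x \leq p^n - 1$ satisfies $x \equiv k \pmod{p}$ if and only if $x = k + p t$ for some integer $t \geq 0$. I would then determine exactly which values of $t$ keep $x$ in the allowed range: the inequality $0 \leq k + pt \leq p^n - 1$ forces $0 \leq t \leq \frac{p^n - 1 - k}{p}$. Since $0 \leq k \leq p-1$, one checks that $\frac{p^n - 1 - k}{p}$ lies in the interval $[\,p^{n-1} - 1,\ p^{n-1} - \tfrac{1}{p}\,]$, so the number of admissible nonnegative integers $t$ is exactly $p^{n-1}$ (namely $t = 0, 1, \ldots, p^{n-1} - 1$). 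This gives the count directly.

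An equally good alternative, which I might prefer for brevity, is a partition/pigeonhole argument: the map sending $x \mapsto (x \bmod p)$ assigns to each of the $p^n$ integers one of the $p$ residues $0, \ldots, p-1$, and consecutive blocks of $p$ consecutive integers each contain exactly one representative of every residue class. Since $\{0, \ldots, p^n - 1\}$ splits into $p^{n-1}$ such blocks of length $p$, each residue class is hit exactly $p^{n-1}$ times. Either framing makes the bookkeeping transparent.

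I do not expect any genuine obstacle here, since the statement is a routine counting lemma; the only thing to be careful about is the boundary bookkeeping, namely verifying that the floor $\left\lfloor \frac{p^n - 1 - k}{p} \right\rfloor$ equals $p^{n-1} - 1$ for every $k$ in the range rather than varying with $k$. I would make this explicit by noting $p^n - 1 - k = p(p^{n-1} - 1) + (p - 1 - k)$ with $0 \leq p - 1 - k \leq p - 1$, which exhibits $p^{n-1} - 1$ as the quotient and $p - 1 - k$ as the remainder upon division by $p$, confirming the count is independent of $k$ and completing the proof.
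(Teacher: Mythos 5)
Your proposal is correct and follows essentially the same route as the paper, which also parametrizes the set as $\{k + pj : 0 \leq j \leq p^{n-1}-1\}$ and reads off the count; your version simply makes the boundary bookkeeping (that the floor is independent of $k$) explicit where the paper calls it clear. No issues.
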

\begin{proof}
Let $X_k = \{a \in [0,p^{n}-1] \mid a \equiv k \pmod{p} \}$ for some $k \in [0,p-1]$. The set $X_k$ can be rewritten as $\{ a \mid a = k+pj \text{ for } 0 \leq j \leq p^{n-1}-1 \}$ which clearly has size $p^{n-1}$.
\end{proof}

The following main theorem gives the Euler phi function on powers of primes in the $\zrho$. It may be helpful to recall the three distinct categories of primes from Proposition~\ref{prop:primes} and the notation in Convention~\ref{convention_on_notation}.

\begin{theorem}\label{thm:sizeOfUnits}
For each prime $\gamma \in \zrho$, the cardinality $\Big|\zrhomodunit{\gamma^n}\Big|$ is given by
\begin{align}
\ephi{(\gamma^n)} &= 3^{n}-3^{n-1} &\hspace{-1in}\text{ if } \gamma=\beta \label{thm:sizeOfUnits_1}\\
\ephi{(\gamma^n)} &= p^{2n}-p^{2n-2} &\hspace{-1in}\text{ if } \gamma=p \label{thm:sizeOfUnits_2}\\
\ephi{(\gamma^n)} &= q^{n}-q^{n-1} &\hspace{-1in}\text{ if } \gamma=\psi \label{thm:sizeOfUnits_3}.
\end{align}
\end{theorem}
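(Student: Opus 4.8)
The plan is to compute each cardinality by counting the units among the complete residue class representatives found in Theorem~\ref{thm:equivClasses}, using the unit criteria from Theorem~\ref{thm:unitsInRings}. In each case the total number of classes is already known from Theorem~\ref{thm:equivClasses}, so the most efficient route is to count the \emph{non-units} and subtract. I would split into the three prime categories and handle them separately.

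\textbf{The even prime case ($\gamma=\beta$).} Here I must treat the even and odd exponents together to produce the single formula $3^n - 3^{n-1}$. By Theorem~\ref{thm:equivClasses}, $\zrhomod{\beta^{2m}}$ has $3^{2m}$ classes and $\zrhomod{\beta^{2m+1}}$ has $3^{2m+1}$ classes; in both cases the total count is $3^n$. By equivalence~\eqref{thm:unitsInRings_1} of Theorem~\ref{thm:unitsInRings}, the non-units are exactly the classes $[a+b\rho]$ with $a+b \equiv 0 \pmod 3$, i.e.\ the $\even$ classes. So I would count how many representatives in the given box satisfy $a+b\equiv 0 \pmod 3$ and show this number is $3^{n-1}$. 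For the even exponent, with $0\leq a,b\leq 3^m-1$, for each of the $3^m$ choices of $a$ exactly $3^{m-1}$ choices of $b$ (one-third, by Lemma~\ref{lem:pchunks} with $p=3$) give $a+b\equiv 0\pmod 3$, for a total of $3^{2m-1}=3^{n-1}$ non-units; subtracting gives $3^{2m}-3^{2m-1}=3^n-3^{n-1}$. The odd exponent runs the same way over the box $0\leq a\leq 3^{m+1}-1$, $0\leq b\leq 3^m-1$, again yielding $3^{n-1}$ non-units and the desired difference.

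\textbf{The rational prime case ($\gamma=p$).} By Theorem~\ref{thm:equivClasses} there are $p^{2n}$ classes, represented by $[a+b\rho]$ with $0\leq a,b\leq p^n-1$. By equivalence~\eqref{thm:unitsInRings_2}, the non-units are the classes with $p\divides a$ \emph{and} $p\divides b$. By Lemma~\ref{lem:pchunks}, exactly $p^{n-1}$ values of $a$ in $[0,p^n-1]$ are divisible by $p$, and likewise for $b$; these choices are independent, giving $p^{n-1}\cdot p^{n-1}=p^{2n-2}$ non-units. Subtracting from $p^{2n}$ yields $p^{2n}-p^{2n-2}$, which is~\eqref{thm:sizeOfUnits_2}.

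\textbf{The split prime case ($\gamma=\psi$).} By Theorem~\ref{thm:equivClasses} there are $q^n$ classes, represented by $[a]$ with $0\leq a\leq q^n-1$. By equivalence~\eqref{thm:unitsInRings_3}, $[a]$ is a non-unit precisely when $q\divides a$, and Lemma~\ref{lem:pchunks} (with $p=q$) counts exactly $q^{n-1}$ such $a$. Subtracting gives $q^n-q^{n-1}$, establishing~\eqref{thm:sizeOfUnits_3}. I expect the split prime and rational prime cases to be essentially immediate applications of the counting lemma; the only genuinely delicate point is the even prime case, where one must verify that a single uniform count of $3^{n-1}$ non-units arises from two superficially different representative boxes (the square box for even exponents and the rectangular box for odd exponents). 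Confirming that the one-third density of $\even$ representatives holds in the rectangular box is the step I would double-check most carefully.
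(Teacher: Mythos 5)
Your proposal is correct and follows essentially the same route as the paper's proof: count the non-units among the residue representatives of Theorem~\ref{thm:equivClasses} using the criteria of Theorem~\ref{thm:unitsInRings} and Lemma~\ref{lem:pchunks}, then subtract from the total. The paper likewise splits the $\beta$ case into even and odd exponents and arrives at the same counts ($3^{2m-1}$ and $3^{2m}$ non-units, respectively), so no comparison beyond this is needed.
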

\begin{proof}
To find the cardinality of the group of units $\zrhomodunit{\gamma^n}$, it suffices to find the number of nonunits in $\zrhomod{\gamma^n}$ and subtract this from the cardinality of $\zrhomod{\gamma^n}$ as given in Theorem~\ref{thm:equivClasses}.

\noindent\textbf{CASE 1:} $(\gamma=\beta)$
\begin{enumerate}[a.]
\item $(n = 2m)$ By Theorem~\ref{thm:equivClasses}, the size of $\zrhomod{\beta^{2m}} = \{ [a+b\rho] \mid 0 \leq a,b \leq 3^m-1 \}$ is $3^{2m}$. By Theorem~\ref{thm:unitsInRings}, the set of nonunits in $\zrhomod{\beta^{2m}}$ is given by $$X_{\beta^{2m}} = \{ [a+b\rho] \in \zrhomod{\beta^{2m}} \mid a \equiv -b \!\!\! \pmod{3} \}.$$ By Lemma~\ref{lem:pchunks}, there are $3^{m-1}$ and $3^{m}$ choices for the values of $a$ and $b$, respectively. Thus we have $\big| X_{\beta^{2m}} \big| = 3^{m-1}3^{m} = 3^{2m-1}.$ Hence $\Big|\zrhomodunit{\beta^{2m}}\Big| = 3^{2m}-3^{2m-1}$.
\item $(n = 2m+1)$ By Theorem~\ref{thm:equivClasses}, the size of $$\zrhomod{\beta^{2m+1}} = \{ [a+b\rho] \mid 0 \leq a \leq 3^{m+1}-1 \text{\, and \, } 0 \leq b \leq 3^m-1 \}$$ is $3^{2m+1}$. By Theorem~\ref{thm:unitsInRings}, the set of nonunits in $\zrhomod{\beta^{2m+1}}$ is given by $$X_{\beta^{2m+1}} = \{ [a+b\rho] \in \zrhomod{\beta^{2m+1}} \mid a \equiv -b \!\!\! \pmod{3}\}.$$ By Lemma~\ref{lem:pchunks}, there are $3^{m}$ choices for each of the values $a$ and $b$. Thus we have $\big| X_{\beta^{2m+1}} \big| = 3^{m}3^{m} = 3^{2m}$. Hence $\Big|\zrhomodunit{\beta^{2m+1}}\Big| = 3^{2m+1}-3^{2m}$.
\end{enumerate}
In both the even and the odd case,  $\Big|\zrhomodunit{\beta^{n}}\Big| = 3^{n}-3^{n-1}$, so \eqref{thm:sizeOfUnits_1} holds.

\noindent\textbf{CASE 2:} $(\gamma=p)$ By Theorem~\ref{thm:equivClasses}, the size of $\zrhomod{p^n} = \{ [a+b\rho] \mid 0 \leq a,b \leq p^n-1 \}$ is $p^{2n}$. By Theorem~\ref{thm:unitsInRings}, the set of nonunits in $\zrhomod{p^n}$ is given by $$X_{p^n} = \{ [a+b\rho] \in \zrhomod{p^n} \mid \gcd{(a,p)}\not= 1 \text{ and } \gcd{(b,p)}\not= 1\}.$$ The set $X_{p^n}$ can be rewritten as $\{ [a+b\rho] \in \zrhomod{p^n} \mid p \text{ divides } a \text{ and } b \}$. By Lemma~\ref{lem:pchunks}, there are $p^{n-1}$ choices for each of the values $a$ and $b$. Thus $\big| X_{p^n} \big| = p^{n-1}p^{n-1} = p^{2n-2}$. Hence $\Big|\zrhomodunit{p^{n}}\Big| = p^{2n}-p^{2n-2}$, so \eqref{thm:sizeOfUnits_2} holds.

\noindent\textbf{CASE 3:} $(\gamma=\psi)$ By Theorem~\ref{thm:equivClasses}, the size of $\zrhomod{\psi^n} = \{ [a] \mid 0 \leq a \leq q^n-1 \}$ is $q^{n}$. By Theorem~\ref{thm:unitsInRings}, the set of nonunits in $\zrhomod{\psi^n}$ is given by $$X_{\psi^n} = \{ [a] \in \zrhomod{\psi^n} \mid \gcd{(a,q)} \neq 1 \}.$$ The set $X_{\psi^n}$ can be rewritten as $\{ [a] \in \zrhomod{\psi^n} \mid q \text{ divides } a \}$. By Lemma~\ref{lem:pchunks}, there are $q^{n-1}$ choices for the value of $a$. Thus $\big| X_{\psi^n} \big| = q^{n-1}$. Hence $\Big|\zrhomodunit{\psi^{n}}\Big| = q^{n}-q^{n-1}$, so \eqref{thm:sizeOfUnits_3} holds.
\end{proof}

\begin{remark}[Value of $\ephi(1)$]\label{rem:ephi_on_1}
In the $\mathbb{Z}$-setting, any two integers are congruent modulo 1 and hence $\bigslant{\mathbb{Z}}{(1)}$ has only one congruence class. Thus $\left( \bigslant{\mathbb{Z}}{(1)} \right)^\times$ is the trivial group so $\varphi(1)$ is defined to be 1. Similarly in the $\zrho$-setting, we define $\ephi$ applied to 1 or any of its five associates to equal 1.
\end{remark}


\subsection{\texorpdfstring{$\ephi$}{Euler phi function on Eisenstein} is multiplicative}

Since $\zrho$ is a unique factorization domain, we know that any arbitrary $\eta \in \zrho$ can be written uniquely (up to associates) as a product of powers of primes $\eta = \gamma_1^{k_1} \ldots \gamma_r^{k_r}$. Hence if we can show that $\ephi$ is a multiplicative function, then we can compute $\ephi$ on any arbitrary element in $\zrho$. The following important theorem allows us to conclude that $\ephi$ indeed is multiplicative.

\begin{theorem}\label{thm:unitsMultiplicative}
Let $\eta,\theta \in \zrho$ such that $\gcd{(\eta,\theta)} = 1$. Let $$f: \zrhomodunit{\eta} \times \zrhomodunit{\theta} \rightarrow \; \zrhomod{\eta\theta}$$ such that for $[\sigma] \in \zrhomodunit{\eta}$ and $[\tau] \in \zrhomodunit{\theta}$, we have $f(([\sigma],[\tau])) = [\upsilon]$ where $$\upsilon \equiv \sigma \!\!\!\! \pmod{\eta} \text{\quad and \quad} \upsilon \equiv \tau \!\!\!\! \pmod{\theta}.$$ Then the image of $f$ is given by $\im{f} = \zrhomodunit{\eta\theta}$ and $$f: \zrhomodunit{\eta} \times \zrhomodunit{\theta} \rightarrow \zrhomodunit{\eta\theta}$$ is an isomorphism.
\end{theorem}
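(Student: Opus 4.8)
The plan is to prove this via the Chinese Remainder Theorem for the ring $\zrho$, which applies since $\gcd(\eta,\theta)=1$ means the ideals $(\eta)$ and $(\theta)$ are comaximal in the principal ideal domain $\zrho$. First I would establish the ring isomorphism
\[
\Phi: \zrhomod{\eta\theta} \longrightarrow \zrhomod{\eta} \times \zrhomod{\theta}, \qquad [\upsilon] \longmapsto ([\upsilon]_\eta, [\upsilon]_\theta),
\]
where the subscripts indicate the modulus. The surjectivity and injectivity of $\Phi$ follow from comaximality: since $(\eta)+(\theta)=(1)$, there exist $\mu,\nu\in\zrho$ with $\mu\eta+\nu\theta=1$, and then $(\eta\theta)=(\eta)\cap(\theta)$ gives injectivity while the explicit idempotents give surjectivity. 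I would note that the map $f$ in the statement is exactly the inverse $\Phi^{-1}$ restricted appropriately: given $([\sigma],[\tau])$, the element $[\upsilon]$ simultaneously congruent to $\sigma$ mod $\eta$ and $\tau$ mod $\theta$ is precisely the CRT preimage, so $f$ is well-defined and injective as the restriction of a bijection.

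Next I would show that $\Phi$ carries units to units in both directions, i.e.\ that $\Phi$ restricts to a bijection on unit groups
\[
\zrhomodunit{\eta\theta} \;\xrightarrow{\ \sim\ }\; \zrhomodunit{\eta} \times \zrhomodunit{\theta}.
\]
For any ring isomorphism the image of a unit is a unit and the preimage of a unit is a unit, and the unit group of a product ring $R\times S$ is exactly $R^\times \times S^\times$; hence $[\upsilon]$ is a unit modulo $\eta\theta$ if and only if $[\upsilon]_\eta$ and $[\upsilon]_\theta$ are both units. This is what identifies $\im f = \zrhomodunit{\eta\theta}$ and simultaneously confirms that $f$ lands in the unit group, even though the codomain was stated as the full quotient $\zrhomod{\eta\theta}$. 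I would then observe that $f$ is a group homomorphism on the multiplicative groups, since $\Phi^{-1}$ respects multiplication: if $\upsilon$ corresponds to $(\sigma,\tau)$ and $\upsilon'$ to $(\sigma',\tau')$, then $\upsilon\upsilon'$ is congruent to $\sigma\sigma'$ mod $\eta$ and $\tau\tau'$ mod $\theta$, which is exactly the value $f$ assigns to the componentwise product.

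The main structural conclusion then follows: $f$ is an injective group homomorphism whose image is all of $\zrhomodunit{\eta\theta}$, hence an isomorphism. The one point deserving care is verifying that $f$ is genuinely well-defined independent of the representatives $\sigma,\tau$ chosen, and that the target value $\upsilon$ exists and is unique modulo $\eta\theta$; both reduce to the existence-and-uniqueness half of CRT, which comaximality guarantees. I expect the main obstacle to be not any single hard computation but rather assembling these pieces cleanly, in particular making explicit why comaximality of the ideals holds in $\zrho$ and why it yields both the isomorphism $\Phi$ and the decomposition of the unit group. One clean route is to avoid re-deriving CRT by invoking it as a standard fact for commutative rings with comaximal ideals, then devote the proof to checking that $f$ agrees with $\Phi^{-1}$ on units and that it is multiplicative; alternatively, I could argue the bijectivity of $f$ directly by counting, since Theorem~\ref{thm:sizeOfUnits} will eventually let us compare cardinalities, but the conceptual CRT argument is cleaner and does not presuppose the later multiplicativity of $\ephi$.
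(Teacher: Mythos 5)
Your proof is correct, and it rests on the same engine as the paper's --- the Chinese Remainder Theorem --- but it packages the argument differently. The paper invokes CRT only to get well-definedness of $f$, and then establishes $\im f = \zrhomodunit{\eta\theta}$ by hand: it argues $\gcd(\upsilon,\eta)=\gcd(\sigma,\eta)=1$ and $\gcd(\upsilon,\theta)=\gcd(\tau,\theta)=1$ force $\gcd(\upsilon,\eta\theta)=1$, runs the reverse containment the same way for an arbitrary $[\chi]\in\zrhomodunit{\eta\theta}$, and finishes by computing $\ker f = \{([1],[1])\}$ for injectivity, leaving the homomorphism property as ``trivial to show.'' You instead promote CRT to the full ring isomorphism $\Phi:\zrhomod{\eta\theta}\to\zrhomod{\eta}\times\zrhomod{\theta}$, using comaximality of $(\eta)$ and $(\theta)$ in the principal ideal domain $\zrho$, identify $f$ with $\Phi^{-1}$, and read off the unit-group statement from the general facts that ring isomorphisms preserve units and that $(R\times S)^\times = R^\times\times S^\times$. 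Your route is more conceptual: it makes the image computation, the injectivity, and the multiplicativity all consequences of one structural statement, whereas the paper's route is more elementary and self-contained, staying at the level of congruences and gcds in keeping with the expository tone of the article. Both are complete; the only item to make explicit in a final write-up is the one-line justification that $\gcd(\eta,\theta)=1$ in $\zrho$ (a Euclidean domain, hence a PID) yields $(\eta)+(\theta)=\zrho$ and $(\eta)\cap(\theta)=(\eta\theta)$, which you already flag as the point deserving care.
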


\begin{proof}
If $([\sigma_1],[\tau_1]) = ([\sigma_2],[\tau_2])$ for $[\sigma_1],[\sigma_2] \in \zrhomodunit{\eta}$ and $[\tau_1],[\tau_2] \in \zrhomodunit{\theta}$, then $f(([\sigma_1],[\tau_1])) = f(([\sigma_2],[\tau_2]))$ by the Chinese Remainder Theorem. So $f$ is well defined. 

Let $[\sigma] \in \zrhomodunit{\eta}$ and $[\tau] \in \zrhomodunit{\theta)}$ such that $f(([\sigma],[\tau])) = [\upsilon]$. Since $\upsilon \equiv \sigma \pmod{\eta}$ and $\gcd{(\sigma,\eta)}=1$, then $\gcd{(\upsilon,\eta)}=\gcd{(\sigma,\eta)}=1$. Similarly we have $\gcd{(\upsilon,\theta)}=\gcd{(\tau,\theta)}=1$. Thus $\gcd{(\upsilon,\eta\theta)}=1$ since $\gcd{(\upsilon,\eta)}=1$ and $\gcd{(\upsilon,\theta)}=1$. Hence $f(([\sigma],[\tau])) = [\upsilon] \in \zrhomodunit{\eta\theta}$, so $\im{f} \subseteq \zrhomodunit{\eta\theta}$.

Since $\im{f} \subseteq \zrhomodunit{\eta\theta}$, it suffices to show $\zrhomodunit{\eta\theta} \subseteq \im{f}$. To that end, let $[\chi] \in \zrhomodunit{\eta\theta}$. Then $\gcd{(\chi,\eta\theta)}=1$ implies $\gcd{(\chi,\eta)}=1$ and $\gcd{(\chi,\theta)}=1$ since $\gcd{(\eta,\theta)}=1$. So $\chi \equiv \sigma^\prime \pmod{\eta}$ for some $\sigma^\prime \in \zrhomodunit{\eta}$ and $\chi \equiv \tau^\prime \pmod{\theta}$ for some $\tau^\prime \in \zrhomodunit{\theta}$. Observe that $f(([\sigma^\prime],[\tau^\prime]))=[\chi]$ lies in $\im(f)$ so $\zrhomodunit{\eta\theta} \subseteq \im{f}$. Hence $\im{f} = \zrhomodunit{\eta\theta}$.


Now we show that $f: \zrhomodunit{\eta} \times \zrhomodunit{\theta} \rightarrow \zrhomodunit{\eta\theta}$ is an isomorphism. We just showed that $f$ surjects onto $\zrhomodunit{\eta\theta}$. To show $f$ is injective, it suffices to show that $\ker{f}$ is trivial. Observe that
\begin{align*}
\ker{f} &= \{([\sigma],[\tau]) \in \zrhomodunit{\eta} \times \zrhomodunit{\theta} \mid f(([\sigma],[\tau]))=[1] \in \zrhomodunit{\eta\theta}\}\\
&= \{([\sigma],[\tau]) \mid 1 \equiv \sigma \!\!\!\!\! \pmod{\eta} \text{ and } 1 \equiv \tau \!\!\!\!\! \pmod{\theta}\}\\
&= \{([1],[1])\}.
\end{align*}
Hence $f: \zrhomodunit{\eta} \times \zrhomodunit{\theta} \rightarrow \zrhomodunit{\eta\theta}$ is a bijection. It is trivial to show $f$ is a ring homomorphism and thus $f$ is an isomorphism.
\end{proof}

\begin{corollary}\label{cor:phi_is_mult}
The Euler phi function $\ephi$ on the Eisenstein integers is multiplicative.
\end{corollary}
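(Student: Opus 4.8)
The plan is to deduce the multiplicativity of $\ephi$ directly from the isomorphism established in Theorem~\ref{thm:unitsMultiplicative}. Recall that an Eisenstein-integer–valued function is called \emph{multiplicative} precisely when $\ephi(\eta\theta) = \ephi(\eta)\,\ephi(\theta)$ whenever $\gcd(\eta,\theta) = 1$. So the entire content of the corollary is a matter of comparing cardinalities across the bijection supplied by the previous theorem.

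First I would invoke Theorem~\ref{thm:unitsMultiplicative}: for any $\eta,\theta \in \zrho$ with $\gcd(\eta,\theta) = 1$, the map
$$f \colon \zrhomodunit{\eta} \times \zrhomodunit{\theta} \longrightarrow \zrhomodunit{\eta\theta}$$
is an isomorphism of groups, and in particular a bijection. Since a bijection between finite sets preserves cardinality, I would then take orders on both sides to obtain
$$\Big|\zrhomodunit{\eta\theta}\Big| = \Big|\zrhomodunit{\eta}\Big| \cdot \Big|\zrhomodunit{\theta}\Big|,$$
using that the cardinality of a direct product of finite groups is the product of the cardinalities.

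Finally I would translate this back through Definition~\ref{def:ephi}, which declares $\ephi(\eta) = \big|\zrhomodunit{\eta}\big|$ for each nonzero $\eta$. Rewriting the displayed cardinality identity in terms of $\ephi$ yields $\ephi(\eta\theta) = \ephi(\eta)\,\ephi(\theta)$ whenever $\gcd(\eta,\theta) = 1$, which is exactly the statement that $\ephi$ is multiplicative.

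There is essentially no obstacle here, since all the real work — constructing the map via the Chinese Remainder Theorem and verifying it is a well-defined group isomorphism onto the unit group — was already carried out in Theorem~\ref{thm:unitsMultiplicative}. The only point worth a word of care is that multiplicativity for number-theoretic functions is the restricted (coprime) notion rather than complete multiplicativity, so I would make sure the statement is phrased with the coprimality hypothesis $\gcd(\eta,\theta)=1$ matching the hypothesis of the theorem being cited.
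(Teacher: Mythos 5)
Your proposal is correct and follows the paper's proof essentially verbatim: both invoke the isomorphism $\zrhomodunit{\eta\theta} \cong \zrhomodunit{\eta} \times \zrhomodunit{\theta}$ from Theorem~\ref{thm:unitsMultiplicative} and then compare cardinalities via Definition~\ref{def:ephi}. Your closing remark about the coprime (rather than completely multiplicative) convention is a reasonable point of care but does not change the argument.
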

\begin{proof}
Let $\eta,\theta \in \zrho$ such that $\gcd{(\eta,\theta)}=1$. By Theorem~\ref{thm:unitsMultiplicative}, we know $$\zrhomodunit{\eta\theta} \cong \zrhomodunit{\eta} \times \zrhomodunit{\theta}.$$ Since by Definition~\ref{def:ephi} the symbols $\ephi{(\eta\theta)}, \, \ephi{(\eta)}, \text{ and } \ephi{(\theta)}$ are defined to be the cardinalities of $\zrhomodunit{\eta\theta}, \, \zrhomodunit{\eta} \text{ and } \zrhomodunit{\theta}$, respectively, we conclude that $\ephi{(\eta\theta)} = \ephi{(\eta)}\ephi{(\theta)}$.
\end{proof}

\begin{example}
Recall that in Example~\ref{exam:eta_as_product_of_powers_of_primes}, we showed that $\eta = 48 - 72 \rho$ can be written as the following product of powers of primes:
$$\eta = 48 - 72 \rho = 2^3 \cdot \beta^2 \cdot (5 + 2 \rho).$$ We can now compute $\ephi{(\eta)}$ given its prime factorization: 
\begin{align*}
\ephi{(\eta)} &= \ephi{(2^3 \cdot \beta^2 \cdot (5 + 2 \rho))}\\
&= \ephi{(2^3)} \cdot \ephi{(\beta^2)} \cdot \ephi{(5 + 2 \rho)} & \mbox{by Corollary~\ref{cor:phi_is_mult}}\\
&= \Big|\zrhomodunit{2^3}\Big| \cdot \Big|\zrhomodunit{\beta^2}\Big| \cdot \Big|\zrhomodunit{5 + 2 \rho}\Big|\\
&= (2^6 - 2^4) \cdot (3^2 - 3^1) \cdot (19^1 - 19^0) & \mbox{by Theorem~\ref{thm:sizeOfUnits}}\\
&= 5184.
\end{align*}
\end{example}


\section{Applications}

\subsection{Group structures and a cyclicity sufficiency criterion for certain unit groups}

\begin{example}[The group structure of $\zrhomodunit{\beta^2}$]
By Theorem~\ref{thm:sizeOfUnits}, we know that $\zrhomodunit{\beta^2}$ has $6$ elements. Thus $\zrhomodunit{\beta^2}$ is isomorphic to either $\mathbb{Z}_6$ or $S_3$. We give the Cayley table for $\zrhomodunit{\beta^2}$ below.
$$
\begin{array}{|c||c|c|c|c|c|c|}
\hline
\equiv \!\!\!\!\!\! \pmod{\beta^2} & 1 & 1+\rho & \rho & 2 & 2+2\rho & 2\rho\\
\hline
\hline
1 & 1 & 1+\rho & \rho & 2 & 2+2\rho & 2\rho\\
\hline
1+\rho & 1+\rho & \rho & 2 & 2+2\rho & 2\rho & 1\\
\hline
\rho & \rho & 2 & 2+2\rho & 2\rho & 1 & 1+\rho\\
\hline
2 & 2 & 2+2\rho & 2\rho & 1 & 1+\rho & \rho\\
\hline
2+2\rho & 2+2\rho & 2\rho & 1 & 1+\rho & \rho & 2\\
\hline
2\rho & 2\rho & 1 & 1+\rho & \rho & 2 & 2+2\rho\\
\hline
\end{array}
$$
Notice that both $1+\rho$ and $2\rho$ have order $6$, so they are generators for the group. Hence $\zrhomodunit{\beta^2}$ is cyclic, so it must be isomorphic to $\mathbb{Z}_6$.
\end{example}

\begin{example}[The group structure of $\zrhomodunit{\beta^3}$]
By Theorem~\ref{thm:equivClasses}, we know that $$\bigslant{\zrho}{(\beta^3)} = \{ [a+b\rho] \mid 0 \leq a \leq 8 \text{ and } 0 \leq b \leq 2\}.$$
By Theorem~\ref{thm:sizeOfUnits}, we know that $\zrhomodunit{\beta^3}$ has $18$ elements. These $18$ elements are given by 
\begin{align*}
\zrhomodunit{\beta^3} &= \{ [a+b\rho] \in \bigslant{\zrho}{(\beta^3)} \mid a+b \not\equiv 0 \pmod{3} \}\\
&= \{ [1],[2],[4],[5],[7],[8],[\rho],[1+\rho],[3+\rho],[4+\rho],[6+\rho],[7+\rho],\\
&\qquad\![2\rho],[2+2\rho],[3+2\rho],[5+2\rho],[6+2\rho],[8+2\rho]\}
\end{align*}
according to Theorem~\ref{thm:unitsInRings}. Thus $\zrhomodunit{\beta^3}$ is isomorphic to one of the five groups of order $18$. However, since $\zrho$ is a commutative ring, every ideal in $\zrho$ is commutative. So $\zrhomod{\beta^3}$ is a commutative quotient ring, which implies that $\zrhomodunit{\beta^3}$ is an abelian group. There are only two abelian groups of order $18$: $\mathbb{Z}_{18}$ and $\mathbb{Z}_6 \times \mathbb{Z}_3$. It suffices to find the order of every element.

Let $a+b\rho \in \zrhomodunit{\beta^3}$ such that the order of $a+b\rho$ is $n$. So $(a+b\rho)^n \equiv 1 \pmod{\beta^3}$. Let $(a+b\rho)^n = c+d\rho$. Then $c+d\rho \equiv 1 \pmod{\beta^3}$ implies $\frac{(c+d\rho)-1}{\beta^3} \in \zrho$. Using $\beta^3 = -3-6\rho$, observe that
\begin{align*}
\frac{(c+d\rho)-1}{\beta^3} &= \frac{(c-1)+d\rho}{-3-6\rho}\cdot\frac{-3-6\rho^2}{-3-6\rho^2}\\
&= \frac{-3c+3-3d\rho-6c\rho^2+6\rho^2-6d\rho^2}{27}\\
&= \frac{-3c+3-3d\rho+6c+6c\rho-6-6\rho-6d}{27}\\
&= \frac{2c-2d-1}{9} + \frac{2c-d-2}{9}\rho.
\end{align*}
Thus $c+d\rho \equiv 1 \pmod{\beta^3}$ if $\frac{2c-2d-1}{9} + \frac{2c-d-2}{9}\rho \in \zrho$. We use this to find the orders of all $18$ elements, which are given below in red.
\begin{center}
	\begin{tabular}{|ll|ll|ll|ll|ll|ll|}
	\hline
  $[1]$ &\textcolor{red}{1} & $[2]$ &\textcolor{red}{6} & $[4]$ &\textcolor{red}{3} & $[5]$ &\textcolor{red}{6} & $[7]$ &\textcolor{red}{3} & $[8]$ &\textcolor{red}{2}\\
  \hline
	$[\rho]$ &\textcolor{red}{3} & $[1+\rho]$ &\textcolor{red}{6} & $[3+\rho]$ &\textcolor{red}{3} & $[4+\rho]$ &\textcolor{red}{6} & $[6+\rho]$ &\textcolor{red}{3} & $[7+\rho]$ &\textcolor{red}{6}\\
	\hline
  $[2\rho]$ &\textcolor{red}{6} & $[2+2\rho]$ &\textcolor{red}{6} & $[3+2\rho]$ &\textcolor{red}{6} & $[5+2\rho]$ &\textcolor{red}{3} & $[6+2\rho]$ &\textcolor{red}{6} & $[8+2\rho]$ &\textcolor{red}{3}\\
  \hline
	\end{tabular}
\end{center}
None of the elements have order $18$, so we conclude that $\zrhomodunit{\beta^3}$ is isomorphic to $\mathbb{Z}_6 \times \mathbb{Z}_3$.
\end{example}

\begin{theorem}\label{thm:structOfPsi}
Consider $\psi\overline{\psi} = q$ where $q \equiv 1 \pmod{3}$ where $q$ is prime in $\mathbb{Z}$ and $\psi$ and $\overline{\psi}$ are prime in $\zrho$. Then $\zrhomodunit{\psi^n} \cong \mathbb{Z}_{q^n-q^{n-1}}$.
\end{theorem}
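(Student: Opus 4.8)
The plan is to show that the Category~3 quotient ring $\zrhomod{\psi^n}$ is isomorphic \emph{as a ring} to the familiar integer quotient $\bigslant{\mathbb{Z}}{(q^n)}$, and then to invoke the classical fact that the unit group of $\bigslant{\mathbb{Z}}{(q^n)}$ is cyclic for an odd prime $q$. The whole point is that, by Theorem~\ref{thm:equivClasses}, the classes in $\zrhomod{\psi^n}$ are already represented by rational integers, so this quotient ``collapses'' onto a purely rational one.

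First I would consider the natural ring homomorphism $\iota\colon \mathbb{Z} \to \zrhomod{\psi^n}$ given by $a \mapsto [a]$. Equation~\eqref{thm:equivClasses_4} of Theorem~\ref{thm:equivClasses} says precisely that every class in $\zrhomod{\psi^n}$ has an integer representative $a$ with $0 \leq a \leq q^n-1$, so $\iota$ is surjective. For the kernel, I would note that $a \in \ker \iota$ means $\psi^n \divides a$ in $\zrho$; conjugating and using $\overline{a}=a$ for $a \in \mathbb{Z}$ gives $\overline{\psi}^n \divides a$ as well, and since $\psi$ and $\overline{\psi}$ are nonassociate primes this forces $\psi^n\overline{\psi}^n = q^n \divides a$. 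The reverse inclusion is immediate from $q^n = \psi^n\overline{\psi}^n$, so $\ker \iota = (q^n)$ and the First Isomorphism Theorem yields $\zrhomod{\psi^n} \cong \bigslant{\mathbb{Z}}{(q^n)}$ as rings. This kernel computation is essentially the one already carried out inside Case~$4$ of Theorem~\ref{thm:equivClasses}, so it can simply be cited.

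A ring isomorphism carries units to units, so it restricts to a group isomorphism $\zrhomodunit{\psi^n} \cong \left(\bigslant{\mathbb{Z}}{(q^n)}\right)^\times$. Since $q \equiv 1 \pmod 3$ forces $q \geq 7$, and in particular $q$ is an odd prime, the classical theorem on the existence of primitive roots guarantees that $\left(\bigslant{\mathbb{Z}}{(q^n)}\right)^\times$ is cyclic of order $\varphi(q^n) = q^n - q^{n-1}$; this order agrees with $\ephi(\psi^n)$ as computed in Theorem~\ref{thm:sizeOfUnits}. Therefore $\zrhomodunit{\psi^n} \cong \mathbb{Z}_{q^n-q^{n-1}}$, as claimed.

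There is no serious obstacle here, since once the ring isomorphism is in place the result is a direct appeal to the primitive-root theorem over $\mathbb{Z}$. The only step requiring genuine care is the kernel computation, where one must use that $\psi$ and $\overline{\psi}$ are nonassociate in order to promote $\psi^n \divides a$ to $q^n \divides a$; this is exactly the feature that makes Category~$3$ behave differently from the even prime $\beta$ and the inert primes $p$.
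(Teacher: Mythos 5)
Your proposal is correct and follows essentially the same route as the paper: both identify $\zrhomodunit{\psi^n}$ with $\left(\bigslant{\mathbb{Z}}{(q^n)}\right)^\times$ via the integer representatives supplied by Theorem~\ref{thm:equivClasses} and then invoke the primitive-root theorem for odd prime powers. The only difference is bookkeeping: you establish injectivity by computing the kernel of $\mathbb{Z}\to\zrhomod{\psi^n}$ at the ring level (the conjugation argument already present in Case~4 of Theorem~\ref{thm:equivClasses}) and apply the First Isomorphism Theorem, whereas the paper deduces bijectivity from surjectivity together with the cardinality count $\ephi(\psi^n)=q^n-q^{n-1}$ from Theorem~\ref{thm:sizeOfUnits}.
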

\begin{proof}
It suffices to show that the map $f: \left(\bigslant{\mathbb{Z}}{(q^n)}\right)^\times \rightarrow \zrhomodunit{\psi^n}$ such that $f([a]) = [a]$ is an isomorphism. Let $[a],[b] \in \left(\bigslant{\mathbb{Z}}{(q^n)}\right)^\times$. Observe that $$f([a][b])=f([ab])=[ab]=[a][b]=f([a])f([b]),$$ so $f$ is a homomorphism. Now let $[c] \in \zrhomodunit{\psi^n}$. Trivially there exists $[c] \in \left(\bigslant{\mathbb{Z}}{(q^n)}\right)^\times$ such that $[c] \in \left(\bigslant{\mathbb{Z}}{(q^n)}\right)^\times \mapsto [c] \in \zrhomodunit{q^n}$ under $f$, so $f$ is surjective.

Lastly, we know that $\Big|\left(\bigslant{\mathbb{Z}}{(q^n)}\right)^\times\Big| = q^n-q^{n-1}$ by the Euler phi function on $\mathbb{Z}$. Also $\Big|\zrhomodunit{\psi^n}\Big| = q^n-q^{n-1}$ by Theorem~\ref{thm:sizeOfUnits}. Hence $\Big|\left(\bigslant{\mathbb{Z}}{(q^n)}\right)^\times\Big| = \Big|\zrhomodunit{\psi^n}\Big|$, so $f$ is an isomorphism and these two unit groups are algebraically equivalent. It is well known that $\left(\bigslant{\mathbb{Z}}{(q^n)}\right)^\times \cong \mathbb{Z}_{q^n-q^{n-1}}$ for a prime $q \in \mathbb{Z}$. Therefore $\zrhomodunit{\psi^n} \cong \mathbb{Z}_{q^n-q^{n-1}}$.
\end{proof}

\begin{example}
We find the structure of $\zrhomodunit{-6\rho}$ and a complete set of residue classes. Observe that $-6\rho = -3\rho \cdot 2 = \beta^2 \cdot 2$. As an application of Theorem~\ref{thm:unitsMultiplicative}, we find an explicit isomorphism $$f: \zrhomodunit{\beta^2} \times \zrhomodunit{2} \longmapsto \zrhomodunit{-6\rho}.$$ By Theorems~\ref{thm:equivClasses}~and~\ref{thm:unitsInRings}, we have
\begin{align*} 
\zrhomodunit{\beta^2} &= \begin{cases} [a+b\rho] \, {\biggm|} \!\!\!\!\!\begin{rcases} &0 \leq a,b \leq 2 \text{ and}\\ &a \not\equiv -b \!\!\!\pmod{3} \end{rcases}\end{cases}\\
&= \{ 1, 1+\rho, \rho, 2, 2\rho, 2+2\rho\}
\end{align*}
and
\begin{align*}
\zrhomodunit{2} &= \begin{cases} [a+b\rho] \, {\Biggm|} \!\!\!\!\!\begin{rcases} &0 \leq a,b \leq 1 \text{ and}\\ &\gcd{(a,2)}=1 \text{ or }\! \gcd{(b,2)}=1 \end{rcases}\end{cases}\\
&= \{ 1, \rho, 1+\rho\}.
\end{align*}
It suffices to show where each of the 18 elements $([\sigma],[\tau])$ in $\zrhomodunit{\beta^2} \times \zrhomodunit{2}$ map to in $\zrhomodunit{-6\rho}$. For example, to find the image of some $([\sigma],[\tau])$, we solve the system of congruences
$$
\begin{cases}
a+b\rho \equiv \sigma \!\!\!\pmod{\beta^2}\\
a+b\rho \equiv \tau \!\!\!\pmod{2}
\end{cases}
$$
via the Chinese Remainder Theorem (CRT). By the proof of CRT, it is not hard to see that a solution modulo $\beta^2\cdot2$ is
\begin{align*}
a+b\rho &= \sigma\cdot2\cdot2 + \tau\cdot\beta^2\cdot(1+\rho)\\
&= 4\sigma + 3\tau.
\end{align*}
Using this equation, we find the image of $f$, which is given in the third column of the following table.
\begin{center}
	\begin{tabular}{|c|c||c|}
	\hline
	$\sigma$ & $\tau$ & $a+b\rho \in \zrhomodunit{-6\rho}$\\
	\hline
	\hline
 1 & 1 & 1 \\ 
 1 & $\rho$ & $4+3\rho$ \\
 1 & $1+\rho$ & $1+3\rho$ \\ \hline
 $1+\rho$ & 1 & $1+4\rho$ \\
 $1+\rho$ & $\rho$ & $4+\rho$ \\
 $1+\rho$ & $1+\rho$ & $1+\rho$ \\ \hline
 $\rho$ & 1 & $3+4\rho$ \\
 $\rho$ & $\rho$ & $\rho$ \\
 $\rho$ & $1+\rho$ & $3+\rho$ \\ \hline
 2 & 1 & 5 \\
 2 & $\rho$ & $2+3\rho$ \\
 2 & $1+\rho$ & $5+3\rho$ \\ \hline
 $2\rho$ & 1 & $3+2\rho$ \\
 $2\rho$ & $\rho$ & $5\rho$ \\
 $2\rho$ & $1+\rho$ & $3+5\rho$ \\ \hline
 $2+2\rho$ & 1 & $5+2\rho$ \\
 $2+2\rho$ & $\rho$ & $2+5\rho$ \\
 $2+2\rho$ & $1+\rho$ & $5+5\rho$ \\
  \hline
	\end{tabular}
\end{center}

We now investigate the group structure of $\zrhomodunit{-6\rho}$. Let
\begin{align*}
H_1 &:= \langle[3+2\rho]\rangle = \{ 1, 3+2\rho, 5+2\rho, 5, 3+4\rho, 1+4\rho\},\\
H_2 &:= \langle[4+3\rho]\rangle = \{ 1, 4+3\rho, 1+3\rho\}.
\end{align*}
Observe that $H_1$ and $H_2$ are normal subgroups of $\zrhomodunit{-6\rho}$ and $H_1 \cap H_2 = \{ 1\}$ and $|H_1||H_2| = \Big|\zrhomodunit{-6\rho}\Big|$. Hence $$\zrhomodunit{-6\rho} = H_1 \times H_2 \cong \mathbb{Z}_6 \times \mathbb{Z}_3.$$
\end{example}


\subsection{Euler-Fermat theorem for the Eisenstein integers}

In 1640 in a letter to his colleague Fr\'{e}nicle de Bessy, Fermat writes without proof what is now known as \textit{Fermat's Little Theorem} which states that for a prime $p$ and an integer $a$ coprime to $p$ that $a^{p-1} \equiv 1 \pmod{p}$. This was later proven by Euler in 1736, but the generalization of this result sometimes referred to as the \textit{Euler-Fermat Theorem}, which states that for coprime positive integers $a$ and $n$ it follows that $a^{\varphi(n)} \equiv 1 \pmod{n}$, was proven by Euler in 1763~\cite{Euler1736,Euler1763}. It is this statement that we generalize to the Eisenstein integers.

\begin{theorem}\label{thm:Euler_Fermat_theorem}
Given $\theta,\eta \in \zrho-\{0\}$ such that $\gcd(\theta,\eta) = 1$, it follows that $$\theta^{\ephi(\eta)} \equiv 1 \pmod{\eta}.$$
\end{theorem}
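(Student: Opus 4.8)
The plan is to reduce the Euler–Fermat theorem for $\zrho$ to a purely group-theoretic fact about the finite group $\zrhomodunit{\eta}$, exactly as in the classical $\mathbb{Z}$-setting. The key observation is that the hypothesis $\gcd(\theta,\eta)=1$ means precisely that the class $[\theta]$ is a \emph{unit} in $\zrhomod{\eta}$, so $[\theta] \in \zrhomodunit{\eta}$. By Definition~\ref{def:ephi}, the order of this group is $\ephi(\eta)$. Lagrange's theorem then forces the order of the element $[\theta]$ to divide $\ephi(\eta)$, and raising a group element to (a multiple of) the group order yields the identity. Thus $[\theta]^{\ephi(\eta)} = [1]$ in $\zrhomodunit{\eta}$, which is exactly the congruence $\theta^{\ephi(\eta)} \equiv 1 \pmod{\eta}$.

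More concretely, I would carry out the following steps in order. First, establish that $\gcd(\theta,\eta)=1$ implies $[\theta]$ is a unit: since $\zrho$ is a Euclidean (hence B\'ezout) domain, coprimality gives $\alpha\theta + \beta'\eta = 1$ for some $\alpha,\beta' \in \zrho$, so $[\alpha][\theta]=[1]$ and $[\theta] \in \zrhomodunit{\eta}$. (One must be slightly careful that the quotient $\zrhomod{\eta}$ is finite; this follows because $\eta \neq 0$ and the classes are counted by Theorem~\ref{thm:equivClasses} together with multiplicativity, so $\zrhomodunit{\eta}$ is a genuine finite group.) Second, invoke Lagrange's theorem: the cyclic subgroup $\langle [\theta] \rangle$ has order $k := \operatorname{ord}([\theta])$ dividing $\big|\zrhomodunit{\eta}\big| = \ephi(\eta)$. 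Third, write $\ephi(\eta) = k\ell$ and compute
\[
[\theta]^{\ephi(\eta)} = \left([\theta]^{k}\right)^{\ell} = [1]^{\ell} = [1],
\]
which translates back to $\theta^{\ephi(\eta)} \equiv 1 \pmod{\eta}$.

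I do not expect any serious obstacle here, since the argument is a direct transplant of the classical proof and all the needed infrastructure is already in place. The only point requiring mild care is the \emph{finiteness} and well-definedness of $\zrhomodunit{\eta}$ for \emph{arbitrary} nonzero $\eta$, not just prime powers: this is where one must appeal to the unique factorization $\eta = \gamma_1^{k_1}\cdots\gamma_r^{k_r}$ in $\zrho$ and to Theorem~\ref{thm:unitsMultiplicative}, which decomposes $\zrhomodunit{\eta}$ as a finite product of the explicitly-sized unit groups computed in Theorem~\ref{thm:sizeOfUnits}. With finiteness secured, Lagrange's theorem applies and the result is immediate; if anything, the ``hard part'' is purely expository, namely stating cleanly that the B\'ezout identity survives passage to the quotient so that $[\theta]$ is indeed invertible.
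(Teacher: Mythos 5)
Your proposal is correct and follows essentially the same route as the paper's own proof: identify $[\theta]$ as an element of the finite group $\zrhomodunit{\eta}$, apply Lagrange's theorem to the cyclic subgroup $\langle[\theta]\rangle$, and raise to the power $\ephi(\eta)$. The only difference is that you spell out the B\'ezout argument for why $[\theta]$ is a unit and the finiteness of $\zrhomodunit{\eta}$ for arbitrary nonzero $\eta$, points the paper passes over more quickly; this is a welcome bit of extra care but not a different proof.
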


\begin{proof}
Let $\theta,\eta \in \zrho-\{0\}$ with $\gcd(\theta,\eta) = 1$, and consider $\zrhomodunit{\eta}$. Since $\theta$ and $\eta$ are relatively prime, then $\theta$ is congruent to exactly one element in the set of complete residue classes of $\zrhomodunit{\eta}$. Consider the cyclic subgroup $\langle [\theta] \rangle$ of $\zrhomodunit{\eta}$. Since the group $\zrhomodunit{\eta}$ is finite, then its subgroup $\langle [\theta] \rangle$ is $\{ [\theta], [\theta]^2, \ldots, [\theta]^k \}$ where, for some least positive integer $k$, we have $\theta^k \equiv 1 \pmod{\eta}$. Recall that $\left|\zrhomodunit{\eta}\right| = \ephi(\eta)$. Hence by Lagrange's theorem $k$ divides $\ephi(\eta)$, so $k\cdot m = \ephi(\eta)$ for some $m \in \mathbb{Z}$. But that implies
\begin{align*}
\theta^{\ephi(\eta)} = \theta^{k \cdot m} = \left( \theta^k \right)^m 	&\equiv 1^m \pmod{\eta}\\
			&\equiv 1 \pmod{\eta}.
\end{align*}
Thus $\theta^{\ephi(\eta)} \equiv 1 \pmod{\eta}$ as desired.
\end{proof}

\begin{example}
Consider $2 - \rho$ and $\beta^2$ in $\zrho$. Given that $N(2 - \rho) = 7$, the reader can easily verify that $2 - \rho$ is a Category~3 prime by Proposition~\ref{prop:primes}. Since $2 - \rho$ does not divide $\beta^2$, then $\gcd(2 - \rho, \beta^2) = 1$. By Theorem~\ref{thm:Euler_Fermat_theorem}, we can conclude that $(2 - \rho)^{\ephi(\beta^2)} \equiv 1 \pmod{\beta^2}$. To verify this congruence, it suffices to show that $(2 - \rho)^6 \equiv 1 \pmod{\beta^2}$ since $\ephi(\beta^2) = 6$ by Theorem~\ref{thm:sizeOfUnits}. In this case it  is particularly easy to confirm that we are correct since $\beta^2$ and 3 are associates by Proposition~\ref{prop:betasim}, and hence $(2 - \rho)^6$ modulo $\beta^2$ coincides with $(2 - \rho)^6$ modulo 3. Observe $(2 - \rho)^6 = -323-360 \rho$ which is congruent to $1$ modulo $3$, as desired.
\end{example}


\subsection{The range of \texorpdfstring{$\ephi$}{the Euler phi function}}
It is well known that the values $\varphi(n)$ of the standard Euler phi function on the integers is even for all $n \geq 3$. It is natural to examine the range values of the Euler phi function $\ephi$ on the Eisenstein integers. It turns out that $\varphi$ and $\ephi$ share a very similar range.

\begin{theorem}\label{thm:range}
Let $\eta \in \zrho-\{0\}$. Then $\ephi(\eta)$ is an even integer unless $\eta$ is a unit or any associate of $2$.
\end{theorem}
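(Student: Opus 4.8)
The plan is to reduce the question about $\ephi(\eta)$ being even to a statement about the prime factorization of $\eta$, exploiting the multiplicativity of $\ephi$ established in Corollary~\ref{cor:phi_is_mult}. Since $\ephi(\eta) = \prod_i \ephi(\gamma_i^{k_i})$ for the unique factorization $\eta = \gamma_1^{k_1}\cdots\gamma_r^{k_r}$, the product is even as soon as at least one factor $\ephi(\gamma_i^{k_i})$ is even. So the whole problem collapses to understanding the parity of $\ephi(\gamma^n)$ for each of the three prime types, using the explicit formulas in Theorem~\ref{thm:sizeOfUnits}.

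First I would examine the three formulas from Theorem~\ref{thm:sizeOfUnits} case by case. For $\gamma = \beta$, we have $\ephi(\beta^n) = 3^n - 3^{n-1} = 2\cdot 3^{n-1}$, which is always even. For $\gamma = \psi$ (a Category~3 prime), we have $\ephi(\psi^n) = q^n - q^{n-1} = q^{n-1}(q-1)$; since $q \equiv 1 \pmod 3$ forces $q$ odd (as $q$ is a prime congruent to $1$ mod $3$, hence $q \geq 7 > 2$), the factor $q - 1$ is even, so $\ephi(\psi^n)$ is even. For $\gamma = p$ (a Category~2 prime), we have $\ephi(p^n) = p^{2n} - p^{2n-2} = p^{2n-2}(p^2 - 1) = p^{2n-2}(p-1)(p+1)$. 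Here $p \equiv 2 \pmod 3$, and the only such prime that is even is $p = 2$; for every odd $p$ the factor $p - 1$ is even, so $\ephi(p^n)$ is even, whereas for $p = 2$ we get $\ephi(2^n) = 2^{2n} - 2^{2n-2} = 3\cdot 2^{2n-2}$, which is \emph{odd} precisely when $n = 1$ (giving $\ephi(2) = 3$) and even when $n \geq 2$.

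The upshot is that the \emph{only} prime powers $\gamma^n$ with $\ephi(\gamma^n)$ odd are $\gamma^n = 2^1$, an associate of $2$. I would then assemble the cases: if $\eta$ is a unit, $\ephi(\eta) = 1$ by Remark~\ref{rem:ephi_on_1} (odd, as the statement allows); if $\eta$ is an associate of $2$, then $\ephi(\eta) = \ephi(2) = 3$ (odd, again an exception); otherwise $\eta$ has a factorization in which at least one prime-power factor is not of the form (unit) or $2^1$, hence contributes an even factor to the product $\prod_i \ephi(\gamma_i^{k_i})$, making $\ephi(\eta)$ even. Care must be taken with the case $\eta = 2^k$ for $k \geq 2$, which is even by the $p=2$, $n \geq 2$ computation above, and with mixed factorizations containing $2^1$ alongside other primes, where the other factors already force evenness.

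The main obstacle is \textbf{not} the computation but getting the boundary cases exactly right: one must verify that $\ephi(2) = 3$ is genuinely the \emph{sole} odd-valued prime power beyond units, and confirm that no combination of factors can conspire to keep the product odd except when $\eta$ is literally a unit or an associate of $2$ alone. Concretely, the product $\prod_i \ephi(\gamma_i^{k_i})$ is odd if and only if \emph{every} factor is odd, which by the case analysis forces every $\gamma_i^{k_i}$ to be $2^1$; but unique factorization permits at most one factor equal to the associate class of $2$, so the only odd outcomes are the empty product (units) and the single factor $\ephi(2) = 3$. I would present this final logical step carefully, since the phrase ``associate of $2$'' in the statement must be interpreted as $\eta$ being exactly an associate of $2$ (i.e. $\eta \sim 2$), not merely divisible by $2$.
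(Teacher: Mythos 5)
Your proposal is correct and follows essentially the same route as the paper: both invoke the multiplicativity from Corollary~\ref{cor:phi_is_mult}, then run through the three formulas of Theorem~\ref{thm:sizeOfUnits} to see that every prime-power factor contributes an even value except the units and the single exception $\ephi(2)=3$. Your closing remark that the product is odd only when every factor is odd, which by unique factorization forces $\eta$ to be a unit or exactly an associate of $2$, is the same endgame the paper uses, stated a bit more explicitly.
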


\begin{proof}
By Remark~\ref{rem:ephi_on_1}, we know $\ephi$ equals 1 when applied to 1 or any of its five associates. By \eqref{thm:sizeOfUnits_2} of Theorem~\ref{thm:sizeOfUnits}, we know $\ephi(2) = 3$ and hence $\ephi(\eta) = 3$ for all associates $\eta$ of 2.

Let $\eta \in \zrho-\{0\}$ such that $\eta$ is neither a unit nor an associate of 2. Thus in its prime factorization, $\eta$ has a prime factor (up to associates) of the form
\begin{itemize}
\item (Category 1): $\beta^n$ for some $n \geq 1$, or
\item (Category 2): $2^n$ for some $n \geq 2$, or $p^n$ for some $n \geq 1$ where $p>2$ is prime in $\mathbb{Z}$ and $p \equiv 2 \pmod{3}$, or
\item (Category 3): $\psi^n$ for some $n \geq 1$ where $q$ is prime in $\mathbb{Z}$, the norm $N(\psi)$ equals $q$, and $q \equiv 1 \pmod{3}$.
\end{itemize}
Since $\ephi$ is multiplicative, it suffices to show that in each of the three cases above $\ephi$ yields an even value. To this end we utilize the three Equations~\eqref{thm:sizeOfUnits_1}, \eqref{thm:sizeOfUnits_2}, and \eqref{thm:sizeOfUnits_3} of Theorem~\ref{thm:sizeOfUnits}.

By Equation~\eqref{thm:sizeOfUnits_1}, it follows that $\ephi(\beta^n) = 3^n - 3^{n-1}$, and so $\ephi(\beta^n) = 3^{n-1}(3-1) = 2 \cdot 3^{n-1}$ is even and hence $\ephi(\beta^n)$ is even. By Equation~\eqref{thm:sizeOfUnits_2}, it follows that $\ephi{(2^n)} = 2^{2n}-2^{2n-2}$ which is a difference of two even numbers when $n \geq 2$, and hence $\ephi(2^n)$ is even if $n \geq 2$. Moreover for $p \neq 2$ prime in $\mathbb{Z}$ with $p \equiv 2 \pmod{3}$, Equation~\eqref{thm:sizeOfUnits_2} yields $\ephi{(p^n)} = p^{2n}-p^{2n-2}$ and so $\ephi(p^n) = p^{2n-2}(p^2-1)$. Since $p \neq 2$ and prime, we know $p$ is odd and hence $p^2 - 1$ is even. Thus it follows that $\ephi(p^n)$ is even. Lastly, given $\psi$ with $N(\psi) = q$ where $q$ is prime in $\mathbb{Z}$ and $q \equiv 1 \pmod{3}$, Equation~\eqref{thm:sizeOfUnits_3} yields $\ephi(\psi^n) = q^n - q^{n-1}$, and so $\ephi(\psi^n) = q^{n-1}(q-1)$. Since $q$ is prime with $q \equiv 1 \pmod{3}$, we know $q$ is odd and hence $q-1$ is even. Thus $\ephi(\psi^n)$ is even. The claim follows.
\end{proof}


\subsection{Relative primality of real and rho parts of powers of Category 3 primes}

This last subsection is not an application of the Euler phi function. However, it is a very interesting observation on the structure of the real and rho parts of every Category 3 prime in $\zrho$.

\begin{theorem}\label{thm:relprime}
Let $q \equiv 1 \pmod 3$ where $q = \psi \overline{\psi}$ and $\psi = a+b\rho$ is prime in $\zrho$. Then given $\psi^n = c+d\rho$, it holds that $(c,d) = 1$ for all $n \geq 1$.
\end{theorem}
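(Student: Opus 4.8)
The plan is to argue by contradiction, reducing the claim about the integer gcd to a divisibility statement inside $\zrho$. Suppose $\gcd(c,d) = g > 1$, and let $\ell$ be any rational prime dividing $g$, so that $\ell$ divides both the real part $c$ and the rho part $d$ of $\psi^n = c + d\rho$. By Lemma~\ref{lem:intDivideEisen}, having $\ell \divides c$ and $\ell \divides d$ is equivalent to $\ell \divides c + d\rho = \psi^n$ in $\zrho$. Thus a single rational prime factor of $g$ yields a genuine Eisenstein divisibility $\ell \divides \psi^n$, and it suffices to show that no rational prime can divide $\psi^n$.

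The key observation is that $\ell$ is a rational integer and hence fixed by complex conjugation, so $\overline{\ell} = \ell$. Conjugating $\ell \divides \psi^n$ therefore gives $\ell = \overline{\ell} \divides \overline{\psi^n} = \overline{\psi}^{\,n}$, so $\ell$ is a common divisor of $\psi^n$ and $\overline{\psi}^{\,n}$ in $\zrho$. Now $\psi$ and $\overline{\psi}$ are non-associate primes by Proposition~\ref{prop:primes}, which is precisely the content of the Category~3 hypothesis $q \equiv 1 \pmod 3$. Since $\zrho$ is a unique factorization domain, the powers $\psi^n$ and $\overline{\psi}^{\,n}$ share no common prime factor and are therefore coprime, so their only common divisors are units. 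But $N(\ell) = \ell^2 > 1$, so by Lemma~\ref{lem:units} the element $\ell$ is not a unit, a contradiction. Hence no rational prime divides both $c$ and $d$, forcing $\gcd(c,d) = 1$ for every $n \geq 1$.

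The step that must be handled carefully is the coprimality of $\psi^n$ and $\overline{\psi}^{\,n}$: this is exactly where the non-associativity of the conjugate primes $\psi$ and $\overline{\psi}$ does the work, and it is also where the hypothesis $q \equiv 1 \pmod 3$ is essential, since it excludes the ramified prime $\beta$ (where the analogue of $\psi$ and $\overline{\psi}$ would be associate). I expect this to be the only real obstacle; everything else is bookkeeping. A more computational alternative would be to factor the rational prime $\ell$ into its type via Proposition~\ref{prop:primes} and eliminate each possibility by comparing norms against $N(\psi) = q$, but the conjugation argument above avoids that case analysis entirely and is the route I would take.
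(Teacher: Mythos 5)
Your proof is correct, but it takes a genuinely different route from the paper's. The paper argues by induction on $n$ and works entirely with norms: it supposes a common divisor $r$ of the real and rho parts, deduces $r \divides N(\psi^n) = q^n$, hence $r$ is a power of $q$, and then rules out each power by an explicit computation that eventually reduces to the impossibility of $\psi = \overline{\psi}$. You dispense with the induction altogether: Lemma~\ref{lem:intDivideEisen} converts a common rational prime divisor $\ell$ of $c$ and $d$ into the Eisenstein divisibility $\ell \divides \psi^n$, conjugation (which fixes $\ell$) gives $\ell \divides \overline{\psi}^{\,n}$, and unique factorization together with $\psi \not\sim \overline{\psi}$ forces $\ell$ to be a unit, contradicting $N(\ell) = \ell^2 > 1$ via Lemma~\ref{lem:units}. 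Both arguments ultimately rest on the non-associativity of $\psi$ and $\overline{\psi}$, but yours isolates that fact cleanly and makes transparent exactly where the hypothesis $q \equiv 1 \pmod 3$ enters (it fails for the ramified prime $\beta$, where indeed $\beta^2 \sim 3$ has non-coprime parts). What the paper's approach buys is that it stays entirely inside integer arithmetic on norms and never invokes the UFD property explicitly; what yours buys is brevity and the elimination of the somewhat delicate inductive step (in particular, the paper's passage from $N(j_1+j_2\rho)=q^{k+1-2m}$ to $j_1+j_2\rho = \psi^{k+1-2m}$ needs additional justification, since a norm determines an element only up to associates and conjugation, whereas your argument has no analogous step to worry about).
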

\begin{proof}
We proceed by induction. Consider the case when $n=1$. Then $\psi^n = \psi = a+b\rho$ and $N(\psi) = q = a^2 + b^2 -ab$. Assume by way of contradiction that there exists $r \in \mathbb{Z}$ such that $r$ divides $a$ and $b$. Then $r$ divides $a^2 + b^2 -ab$ so that $r$ divides $q$. Since $q$ is prime in $\mathbb{Z}$, we know $r \in \{ 1, q \}$. Consider the case when $r = q$. Then $a = qj_1$ and $b = qj_2$ for some $j_1, j_2 \in \mathbb{Z}$. Observe that
\begin{align*}
q = a^2 + b^2 - ab &= q^2j_1^2 + q^2j_2^2 - q^2j_1j_2\\
&= q^2(j_1^2 + j_2^2 - j_1j_2).
\end{align*}
Thus $1 = q(j_1^2 + j_2^2 - j_1j_2)$ forces $q=1$, a contradiction. Hence $r = 1$ implies $(a,b)=1$ as desired.

Now assume $\psi^k = c+d\rho$ where $(c,d) = 1$ for some $k > 1$. Consider $\psi^{k+1}$ and observe
\begin{align*}
\psi^{k+1} &= (a+b\rho)^{k+1}\\
&= (a+b\rho)^k(a+b\rho)\\
&= (c+d\rho)(a+b\rho)\\
&= (ac-bd)+(ad+bc-bd)\rho.
\end{align*}
Let $C=ac-bd$ and $D=ad+bc-bd$. It suffices to show that $(C,D)=1$. Suppose by way of contradiction that there exists an integer $r$ such that $r$ divides $C$ and $D$. Since $q^{k+1} = N(\psi^{k+1}) = C^2 + D^2 - CD$, we know $r$ divides $C^2 + D^2 - CD$ implies $r$ divides $q^{k+1}$. Then $q$ prime in $\mathbb{Z}$ implies $r \in \{ 1, q^m \}$ for some $m \leq k+1$. Consider the case when $r = q^m$. Then $C = q^mj_1$ and $D = q^mj_2$ for some $j_1,j_2 \in \mathbb{Z}$. Observe that 
\begin{align*}
q^{k+1} = C^2 + D^2 - CD &= q^{2m}j_1 + q^{2m}j_2 - q^{2m}j_1j_2\\
&= q^{2m}(j_1 + j_2 - j_1j_2)\\
&= q^{2m}N(j_1+j_2\rho).
\end{align*}
We have two cases:

\noindent\textbf{CASE 1:} Assume that $2m \geq k+1$. Then $q^{2m-(k+1)}N(j_1+j_2\rho) = 1$ forces $q^{2m-(k+1)} = 1$, a contradiction.

\noindent\textbf{CASE 2:} Assume that $2m < k+1$. Then $N(j_1+j_2\rho)=q^{k+1-2m}$. Observe that
\begin{align*}
N(j_1+j_2\rho)=q^{k+1-2m} &\Rightarrow j_1+j_2\rho = \psi^{k+1-2m}\\
&\Rightarrow q^m(j_1+j_2\rho) = q^m\psi^{k+1-2m}\\
&\Rightarrow C+D\rho = q^m\psi^{k+1-2m}\\
&\Rightarrow \psi^{k+1} = q^m\psi^{k+1}\psi^{-2m}\\
&\Rightarrow \psi^{2m} = q^m\\
&\Rightarrow \psi^2 = q\\
&\Rightarrow \psi\psi = \psi\overline{\psi}\\
&\Rightarrow \psi=\overline{\psi}.
\end{align*}
Notice that $\psi=\overline{\psi}$ is a contradiction since $\psi$ and $\overline{\psi}$ are distinct nonassociates by definition. 

In both cases, $r = 1$ is forced. Hence $(C,D)=1$ as desired. Therefore the claim holds for all $n \geq 1$.
\end{proof}


\section{Open Questions}

\begin{question}
Are the (nonassociate) distinct pairs of category 3 primes $\psi$ and $\overline{\psi}$ always of the same odd class? Does the corresponding $q$ predict the odd class of $\psi$ and $\overline{\psi}$?
\end{question}

\begin{question}
In the $\mathbb{Z}$-setting, it is known that the Euler phi function $\phi$ takes on only even values. The even values $n$ such that the equation $\phi(x)=n$ has no solution $x$ are called \textit{nontotients}. The first few nontotients are
$$14, 26, 34, 38, 50, 62, 68, 74, 76, 86, 90, 94, 98, 114, \ldots$$
Since 1956, attempts have been made to find criteria for when $n$ is a nontotient however only sufficiency criteria were found~\cite{Schinzel1956,Spyropoulos1989} or necessary and sufficient criteria for $n$ of a certain form~\cite{Bateman-Selfridge1963}. Most recently in 1993, Mingzhi gave the following neccessary and sufficient criteria for $n$ to be a nontotient when $n$ is divisible by only one factor of 2~\cite{Mingzhi1993}.
\begin{theorem}
Let $n = 2 p_1^{\alpha_1} p_2^{\alpha_2} \cdots p_s^{\alpha_s}$ with $2<p_1<p_2<\cdots <p_s$, where the $p_i$ are primes. Then $n$ is a nontotient if and only if
\begin{enumerate}[(i)]
\item $n + 1$ is composite;
\item $p_s - 1 \neq 2 p_1^{\alpha_1} p_2^{\alpha_2} \cdots p_{s-1}^{\alpha_{s-1}}.$
\end{enumerate}
\end{theorem}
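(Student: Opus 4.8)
The plan is to determine exactly which integers $x$ can satisfy $\phi(x) = n$, and then read conditions (i) and (ii) directly off that characterization. The structural feature to exploit is that $n = 2 p_1^{\alpha_1} \cdots p_s^{\alpha_s}$ has $2$-adic valuation exactly $1$, i.e. $n \equiv 2 \pmod 4$. First I would write a candidate solution as $x = 2^{a_0} q_1^{b_1} \cdots q_t^{b_t}$ with the $q_i$ distinct odd primes, and compute, writing $v_2$ for the $2$-adic valuation, $v_2(\phi(x)) = \max(a_0 - 1, 0) + \sum_{i} v_2(q_i - 1)$, using that each factor $q_i - 1$ is even while each $q_i^{b_i - 1}$ is odd. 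Demanding $v_2(\phi(x)) = 1$ forces $t \leq 1$, and $t = 0$ is impossible since then $\phi(x)$ would be a power of $2$ whereas $n$ carries the odd factor $m = p_1^{\alpha_1} \cdots p_s^{\alpha_s} > 1$.

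Hence $x$ has exactly one odd prime factor $q$, and a short case check on $a_0$ leaves only $x = q^b$ or $x = 2q^b$ with $q \equiv 3 \pmod 4$ and $b \geq 1$; both give $\phi(x) = q^{b-1}(q-1)$. So $n$ is a totient precisely when $q^{b-1}(q-1) = n$ is solvable in an odd prime $q \equiv 3 \pmod 4$ and an integer $b \geq 1$, and the remaining task is to split this solvability into two exponent regimes matching (i) and (ii).

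For $b = 1$ the equation reads $q = n+1$, so a solution exists iff $n+1$ is prime; moreover $n \equiv 2 \pmod 4$ yields $n + 1 \equiv 3 \pmod 4$ automatically, so the congruence requirement on $q$ comes for free. Thus the $b = 1$ route is available iff condition (i) fails. For $b \geq 2$ the equation forces $q \mid n$, hence $q = p_j$ for some $j$; comparing the exponent of $p_j$ on each side (noting $p_j \nmid p_j - 1$) pins down $b - 1 = \alpha_j$ and leaves the equation $p_j - 1 = 2 \prod_{i \neq j} p_i^{\alpha_i}$.

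The main obstacle, and the key step, is eliminating every $j$ except $j = s$. I would argue by size: if $j < s$, then $p_s$ divides the right-hand side $2\prod_{i \neq j} p_i^{\alpha_i} = p_j - 1$, forcing $p_j - 1 \geq p_s$ and hence $p_j > p_s$, contradicting the ordering $p_1 < \cdots < p_s$ since $j < s$. Hence the only possible $b \geq 2$ solution is $q = p_s$ with $p_s - 1 = 2 p_1^{\alpha_1} \cdots p_{s-1}^{\alpha_{s-1}}$, which is exactly the equation in (ii) (and it forces $p_s \equiv 3 \pmod 4$). Combining the regimes, $n$ is a totient iff $n+1$ is prime or the equation of (ii) holds, so $n$ is a nontotient iff both (i) and (ii) hold. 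I would close by checking the degenerate case $s = 1$: there the empty product makes (ii) read $p_1 \neq 3$, matching that $x = 3^{\alpha_1 + 1}$ is the unique $b \geq 2$ solution exactly when $p_1 = 3$.
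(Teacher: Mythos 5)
The paper does not prove this theorem: it is quoted verbatim from Mingzhi's 1993 article in the Open Questions section, with no argument supplied, so there is no in-paper proof to compare against. Judged on its own, your proposal is a correct and complete proof. The $2$-adic valuation computation $v_2(\phi(x)) = \max(a_0-1,0) + \sum_i v_2(q_i-1)$ correctly forces any preimage of an $n \equiv 2 \pmod 4$ to have the form $q^b$ or $2q^b$ with $q \equiv 3 \pmod 4$, and the resulting equation $q^{b-1}(q-1)=n$ splits cleanly into the regime $b=1$ (solvable iff $n+1$ is prime, with $n+1 \equiv 3 \pmod 4$ automatic) and the regime $b \geq 2$ (where $q \mid n$ forces $q = p_j$, exponent comparison forces $b-1 = \alpha_j$, and your divisibility-by-$p_s$ argument correctly rules out $j < s$). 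The two regimes are exactly the negations of (i) and (ii), and your check of the degenerate case $s=1$, where (ii) collapses to $p_1 \neq 3$, closes the one spot where an empty product could cause confusion. This is essentially the standard argument for Mingzhi's criterion, and nothing in it needs repair.
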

\noindent To our knowledge, a necessary and sufficient criteria for arbitrary $n$ remains an open problem in the $\mathbb{Z}$-setting.

In the $\zrho$-setting, we proved in Theorem~\ref{thm:range} that the Euler phi function $\ephi$ on $\zrho$ is even except on all units and the associates of 2. A natural question to ask is whether a necessary and sufficient criteria can be found for when $\eta \in \zrho$ is a nontotient?
\end{question}

\begin{question}
It is well-known that for most integers $n$, the multiplicative group $\left( \bigslant{\mathbb{Z}}{(n)} \right)^\times$ is not cyclic. In 1801, Gauss published a proof that this group is cyclic if and only if $n$ is of the form $2$, $4$, $p^k$, or $2p^k$, where $p$ is an odd prime~\cite{Gauss1801}. Gauss attributes Euler for coining the term \textit{primitive roots} as the generators of these cyclic groups. In the Gaussian integer setting, Cross gives a classification of all Gaussian integers that have primitive roots~\cite{Cross1983}. A natural question to ask is which Eisenstein integers have primitive roots?
\end{question}

\begin{question}
A beautiful but not so well-known result is Menon's identity, originally proven in 1965 by P. Kesava Menon~\cite{Menon1965}. For a given $n \in \mathbb{N}$, this identity establishes the following tantalizing relation between gcd sums and the values $\phi(n)$ and $\tau(n)$, where $\phi$ is the usual Euler-phi function and $\tau(n)$ counts the number of positive divisors of $n$:
$$ \sum_{\substack{1 \leq k \leq n \\ \gcd(k,n)=1}} \!\!\!\!\! \gcd(k-1, n) = \phi(n) \tau(n).$$
Can we generalize this captivating identity to the Eisenstein integer setting?
\end{question}


\end{document}